\newtheorem{theorem}{Theorem}[section]
\newenvironment{proof}[1][Proof]{\noindent \emph{#1.} }{\hfill \
\rule{0.5em}{0.5em}}
\newcommand{\herm}{^{\text{\normalfont\scriptsize\textsf H}}}
\newcommand{\conj}[1]{\overline{#1}}
\makeatletter\@addtoreset{equation}{section}\makeatother
\makeatletter\@addtoreset{figure}{section}\makeatother
\makeatletter\@addtoreset{table}{section}\makeatother
\begin{document}

\title{Computing the density of states for optical spectra 
by low-rank and QTT tensor approximation}

\author{Peter Benner\thanks{Max Planck Institute for Dynamics of Complex 
Systems, Sandtorstr.~1, D-39106 Magdeburg, Germany
({\tt benner@mpi-magdeburg.mpg.de})}\and
        Venera Khoromskaia\thanks{Max Planck Institute for
        Mathematics in the Sciences, Leipzig;
        Max Planck Institute for Dynamics of Complex Systems, Magdeburg ({\tt vekh@mis.mpg.de}).}
        \and Boris N. Khoromskij\thanks{Max Planck Institute for
        Mathematics in the Sciences, Inselstr.~22-26, D-04103 Leipzig,
        Germany ({\tt bokh@mis.mpg.de}).}
        \and
        Chao Yang\thanks{Berkeley Labs, Berkeley, USA ({\tt cyang@lbl.gov}).}
        \\        
        }

  \date{}

\maketitle
\begin{abstract}
  In this paper, we introduce a new interpolation scheme to approximate the density 
  of states (DOS) for a class of rank-structured matrices with application to 
  the Tamm-Dancoff approximation (TDA) of the Bethe-Salpeter equation (BSE).
 The presented approach for approximating the DOS is based on two main techniques.
First, we propose an economical method for calculating the traces of parametric 
matrix resolvents at interpolation points by taking advantage of
the block-diagonal plus low-rank matrix structure described in 
\cite{BeKhKh_BSE:15,BDKK_BSE:16} for the BSE/TDA problem.  
Second, we show that a regularized or smoothed DOS discretized on a fine grid of size $N$
can be accurately represented by a low rank quantized tensor train (QTT) tensor 
{  that can be determined through a least squares fitting procedure}.
The latter provides good approximation properties 
for strictly oscillating DOS functions with multiple gaps, and requires asymptotically much
fewer ($O(\log N)$)  functional calls   compared with the full grid size $N$.
This approach allows us to overcome the computational difficulties  
of the traditional schemes by avoiding both the need of stochastic sampling
and interpolation by problem independent functions like polynomials etc. 
Numerical tests indicate that the QTT approach  yields   accurate recovery of DOS 
associated with problems that contain  relatively large spectral gaps. 
The QTT tensor rank only weakly depends on the size of a molecular system
 which paves the way for treating large-scale spectral problems.  
\end{abstract}

\noindent\emph{Key words:}
Bethe-Salpeter equation, density of states, absorption spectrum, tensor decompositions, 
model reduction, low-rank matrix, QTT tensor approximation.

\noindent\emph{AMS Subject Classification:} 65F30, 65F50, 65N35, 65F10

\section{Introduction}\label{Introd:MP2}

Numerical approximation of the density of states (DOS)  
or spectral density   (see \S\ref{ssec:DOS_def})   of large matrices
is one of the challenging problems arising in the prediction of electronic, vibrational and  
thermal properties of molecules and crystals and many other applications. %
This topic, first developed in condensed matter physics 
\cite{DuCy:70,WhBlu:72,Turek:88,DrSa:93,Wang:94},  has long since attracted    
interest in the community of numerical linear algebra \cite{DorHo:00,GolVan:13,TrefEmbr:05},
see also a survey on commonly used methodology for approximation of DOS 
for large matrices of  general   structure \cite{LinSaadYa:15}. 
Most traditional methods are based on a polynomial or fractional-polynomial 
interpolation of the DOS regularized by Gaussians or Lorentzians, and  computing  
traces of certain matrix valued functions, say matrix resolvents or polynomials,
defined  at a large set of interpolation points within the spectral interval of interest.
Furthermore, the trace calculations  are typically accomplished with stochastic sampling over 
a large number of random vectors   \cite{LinSaadYa:15}.  

Since the size of matrices resulting from real life applications is
usually large (in quantum mechanics it scales as a polynomial of the molecular size), 
and the DOS of these matrices often exhibits very complicated shape, the above mentioned methods become
prohibitively expensive. 
Moreover, the algorithms based on polynomial interpolants have poor approximating
properties when the spectrum of a matrix exhibits gaps or highly oscillating non-regular shapes, 
as is the case in electronic  structure calculations.
Furthermore, stochastic sampling leads to poor Monte Carlo estimates with slow 
convergence rates and low accuracy.

In this paper we present a new method to  efficiently and accurately approximate the 
DOS for large rank-structured symmetric matrices. The approach amounts to   estimating   the DOS by 
evaluating matrix functions of structured matrices, in particular traces of the matrix resolvent. 
Our main contribution is to   perform each function evaluation at low cost and to reduce 
the total number of function evaluations in the case of fine  representation grid.

 We apply this approximation to the Bethe-Salpeter equation (BSE), which is a widely  
used  model for {\it ab initio} estimation of the absorption spectra for molecules or surfaces 
of solids \cite{BeSa:51,Hedin,StScuFr:98,ReOlRuOni:02,OniReRu:02,ReTouSa1:13}.
In particular, we use the  recently developed   low-rank structured representation of the BSE 
Hamiltonian, which was introduced and analyzed in \cite{BeKhKh_BSE:15}.
An efficient and structured eigenvalue solver for this block-diagonal plus low-rank 
representation of the BSE Hamiltonian as well as to its symmetric positive definite 
surrogate obtained by the Tamm-Dancoff approximation (TDA) is described in \cite{BDKK_BSE:16}.

The approach we take here to approximate the DOS of the BSE Hamiltonian relies on the 
Lorentzian blurring \cite{HayHeKe:72}. The most computationally  expensive  
part of the calculation is reduced to the evaluation of traces of shifted matrix inverses.
Our method is based on two main ingredients. 
First, we propose an economical method for calculating traces of parametric 
matrix resolvents at interpolation points by  taking advantage of  
the block-diagonal plus low-rank BSE/TDA matrix structure described in 
\cite{BeKhKh_BSE:15,BDKK_BSE:16},  which enables the direct inversion of the shifted
Hamiltonian within the same matrix structure.
This  allows us to overcome the computational difficulties  
of the traditional schemes and avoid the need of stochastic sampling.
 Second,  we show that a regularized or smoothed DOS can be accurately approximated by 
a low rank   QTT tensor  \cite{KhQuant:09} that can be determined through 
a least squares procedure.  
The accuracy of approximation and interpolation is controlled by $\epsilon$-truncation
of the corresponding matrix/tensor ranks.

Our fast method for calculating traces of matrix resolvents
for the family of rank-structured matrices exhibits almost linear 
asymptotic complexity scaling with respect to the matrix size. 
We introduce  an explicit rank-structured representation of the matrix inverse 
which can be evaluated efficiently by using the Sherman-Morrison-Woodbury formula.
Note that the diagonal plus low-rank approximation to the BSE  
Hamiltonian introduced in \cite{BeKhKh_BSE:15} employs the low-rank
approximation to the two-electron integrals tensor in the form of a Cholesky factorization 
\cite{VeKhBoKhSchn:12}.   
An efficient structured solver designed to calculate a number of minimal eigenvalues of 
the block-diagonal plus low-rank representation of the BSE/TDA matrices 
is described in \cite{BDKK_BSE:16}.

Another novelty of this paper is the application of the QTT tensor approximation 
to the DOS sampled on a fine grid, which results in a long vector. 
The QTT approximation method was introduced and analyzed for function related 
vectors in \cite{KhQuant:09}. 
It was proven that for a length-$N$ vector obtained from the discretization of 
a classical function (complex exponentials, polynomials, Gaussians etc.),
its QTT image constructed in the $L$-dimensional tensor space with $L=\log_2 N$ exhibits 
an amazingly low separation rank $r_{qtt}$. This rank parameter $r_{qtt}$ appears to be 
independent of the size of the original vector.
Thus the use of QTT tensor compression reduces the number of  representation parameters
from $N$ to $2 r_{qtt}^2 \log_2 N $, 
which allows asymptotically a much smaller number of functional calls, 
$O(\log N)$, to reconstruct the DOS function in the QTT parametrization. 
This might be beneficial in the limit of a large number of representation points $N$
since each functional evaluation of the DOS is highly expensive requiring 
computation of some matrix valued functions.

For example, for a vector of size $N=2^L$ representing the exponential function,  
its reshape (folding) into an $L$-dimensional tensor of size 
${\underbrace{2\times \cdots \times 2}}_{L-fold}$ with modes equal to $2$,
yields a QTT tensor of rank $r_{qtt}=1$, which means the reduction of storage
from $2^L$ to $ 2 \log_2 N=2 L$. For a complex exponential vector there holds $r_{qtt}=2$, then storage 
reduces from $N$ to $8 \log_2 N $.
Similar low rank QTT representations were proven for a wide class of functions~\cite{khor-survey-2014}, 
including strongly oscillating functions of nontrivial shape, see for example 
\cite{VeBoKh:Ewald:14,KhVe:16}  and the new results in \S\ref{ssec: QTT_ranks_DoS} below. 
For a general class of functional vectors, one computes an $\varepsilon$-rank QTT approximation
which leads to a storage size with logarithmic scaling in $N$.  

Numerical tests for moderate size molecules confirm the closeness of
DOS for the TDA model to those computed on the exact BSE spectrum. 
We also justify that the simplified block-diagonal plus low-rank approximation recovers well
the main  landscape and shape details of the DOS curve on the whole energy interval  and
check the precision of the low-rank QTT approximation to  the length-$N$ vector representing the DOS.
We demonstrate the almost linear complexity scaling of the trace calculation algorithm 
applied to TDA matrices of different size.
 We then show by numerical tests that the low-rank QTT tensor interpolation 
 scheme requires only a small number  
 of adaptively chosen samples  in the $N$-vector discretizing the DOS. 
   For instance, a polynomial interpolant of degree $p$ needs  $p+1$ interpolation 
points (functional calls) for the representation of a function on a large $N$-grid.
However, in the case of highly oscillating DOS functions of interest one should impose $p=O(N)$.
On the contrary, the QTT interpolant over $O(\log N)$ interpolation points provides a   
rather accurate representation of the functional $N$-vector of the DOS.  
 
We also discuss the opportunity to reduce the cost of multiple trace calculations 
for the parametric matrix resolvent and, finally, describe modifications necessary
to calculate the optical absorption spectrum via a rank-structured BSE model.

The rest of the paper is structured as follows. In Section \ref{sec:pre}, we recall the main
prerequisites for the description of our method including the rank-structured 
approximation to the BSE/TDA matrix, basic notions of  the  regularization of DOS by Lorentzians
and a short summary on the existing methods for matrices of general structure. 
Section \ref{sec:Rstruc_DOS} discusses the main
techniques of the presented method and the corresponding analysis in Theorems \ref{thm:Trace_cost} and 3.2.
The numerical tests confirm the linear scaling of our algorithm in the size of the grid
on which the DOS is evaluated.
Section \ref{ssec:QTT_DOS} presents a summary of the QTT tensor approximation of function related vectors 
and the analysis of the QTT tensor ranks of the DOS, see Theorem \ref{thm:QTT_R_Gaus_Broad}.  
In Section \ref{ssec: QTT_cross_DoS} the ACA based QTT interpolation is applied to the discretized 
DOS, where the quality of the interpolation is illustrated numerically.
The beneficial features of the new computational schemes are verified by extensive 
numerical experiments on the examples of various molecular systems.
Section  \ref{sec:BSE_case} outlines the extension of the approach 
to the case of full BSE system.
Conclusions  summarize  the main results and address  the application perspectives.

\section{Main prerequisites and outline of initial applications} \label{sec:pre}

 \subsection{Rank-structured approximation to BSE matrix}\label{ssec:BSE_setting}

In this paper we describe a method for efficient  and accurate  approximation of the
DOS for large rank-structured symmetric matrices. Our basic application is concerned 
with estimating the DOS and the absorption spectrum for the Bethe-Salpeter problem  
describing the excitation energies of molecules.    
 
The $2\times 2$-block matrix representation of the Bethe-Salpeter Hamiltonian (BSH) 
leads to the following eigenvalue problem. 
\begin{equation} \label{eqn:BSE-GW1}
   H
 \begin{pmatrix}
 {\bf x}_k\\
{\bf y}_k\\
\end{pmatrix}
\equiv
\begin{pmatrix}
{ A}   &  { B} \\
{- B}^\ast  &  {- A}^\ast  \\
\end{pmatrix}
\begin{pmatrix}
 {\bf x}_k\\
{\bf y}_k\\
\end{pmatrix}
= \omega_k
\begin{pmatrix}
 {\bf x}_k\\ {\bf y}_k\\
\end{pmatrix},
\end{equation}
where the matrix blocks of size $n \times n$, with $n=N_{ov}=N_{o}(N_{b}-N_{o})$,
are defined by
\begin{equation}
 A=  \boldsymbol{\Delta \varepsilon} + V - \widehat{W},\quad
 B= {V} - \widetilde{W},
\label{eq:AB_ex}
\end{equation}
and eigenvalues $\omega_k$ correspond to the excitation energies. 
Here $\boldsymbol{\Delta \varepsilon}$ is a diagonal matrix and
$$
{ V}=[v_{ia,jb}]\quad a, b \in {\cal I}_{v}:=\{N_{o}+1,\ldots,N_{b}\},
\quad i,j\in {\cal I}_{o}:=\{1,\ldots,N_{o}\},
$$
is the rank-$R_B$ two-electron integrals (TEI) matrix projected onto the Hartree-Fock 
molecular orbital basis, where $N_b$ is the number of Gaussian type orbital (GTO)  
basis functions and $N_{o}$ denotes the number of occupied orbitals \cite{BeKhKh_BSE:15}.

 The method for solving the Bethe-Salpeter equation (BSE) using 
low-rank factorizations of the generating matrices has been introduced in \cite{BeKhKh_BSE:15}.
It is based on  a  tensor-structured grid-based Hartree-Fock (HF) solver which
provides not only the full set of eigenvalues and HF orbitals,
but also the two-electron integrals tensor in the form of a low-rank Cholesky factorization,
see \cite{VeKhorTromsoe:15} and references therein.

 The matrix $V$ inherits its low rank from the two-electron integrals tensor,
 and $\widetilde{W}$ is also proven to have a small $\epsilon$-rank (see \cite{BeKhKh_BSE:15}).
 In particular, there holds 
\begin{equation} \label{eqn:L_V_factor}
 V \approx L_V L_V^T,\quad L_V\in \mathbb{R}^{n \times R_V}, \quad R_V \leq R_B,
\end{equation}
with the rank estimates $R_V =R_V(\varepsilon) =\mathcal{O}(N_b |\log \varepsilon |)$, and 
$\mathop{\mathrm{rank}}(\widetilde{W})\leq \mathop{\mathrm{rank}}(V)$. 

In \cite{BDKK_BSE:16}, it was shown that the matrix $\widehat{W}$, which does not exhibit 
an accurate low rank representation,
can be well approximated by a block diagonal matrix
\[
 \widehat{W} \approx \mbox{blockdiag}[\widehat{B},D],
\]
where $\widehat{B}$ is a $N_W\times N_W$ dense block with $N_W=O(n^\alpha)$, $\alpha<1$. 
The size of $N_W$ is nearly the same as the rank parameter of $L_V$.  
As a result, the TDA matrix $A$ can be approximated by a sum of a block-diagonal matrix and 
a low rank matrix shown in Figure \ref{fig:Matr_AN}, i.e.,
\[
 A   \approx  \widehat{A}= \boldsymbol{\Delta \varepsilon}+ Q Q^T - \mbox{blockdiag}[\widehat{B},D]
 \equiv \mbox{blockdiag}[{B}_0,D_0] + Q Q^T.
\]

\begin{figure}[htbp]
\centering
\includegraphics[width=11.0cm]{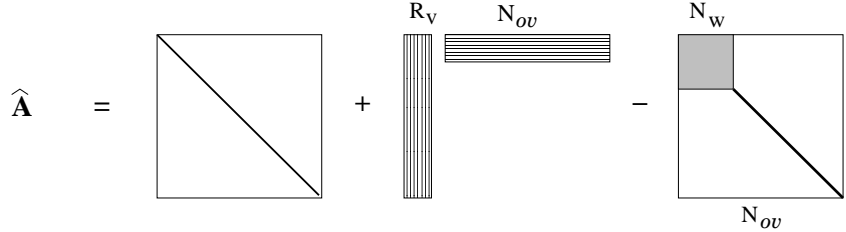} 
\caption{\small Diagonal plus low-rank plus reduced-block structure of the matrix $\widehat{A}$.}
\label{fig:Matr_AN}
\end{figure}

An efficient structured solver designed to calculate a number of minimal eigenvalues of 
the block-diagonal plus low-rank representation of the BSE/TDA matrices 
is described in \cite{BDKK_BSE:16}.  It is based on an efficient subspace iteration  
of the matrix inverse, which for rank-structured matrix formats can be evaluated 
efficiently by using the Sherman-Morrison-Woodbury formula,
thus reducing the numerical expense of the direct diagonalization down to $\mathcal{O}(N_b^2)$ 
in the size of the atomic orbitals basis set, $N_b$. 
  Furthermore, this solver  also includes a QTT-based compression scheme,  
 where both eigenvectors and the rank-structured BSE matrix blocks are 
 represented by block-QTT tensors. The block-QTT representation of the eigenvector 
 is determined by an alternating least squares (ALS) iterative algorithm.
The overall asymptotic complexity for computing several smallest in modulo eigenvalues in
the BSE spectral problem by using the QTT approximation is estimated  to be  
$\mathcal{O}(\log(N_o) N_o^{2})$, where $N_o$ is the number of
occupied orbitals. 

 Matrices in the form (\ref{eqn:BSE-GW1}) are called  $J$-symmetric or Hamiltonian, see 
\cite{BeFaYa:15} for implications on the algebraic properties of the BSE matrix.
In particular, solutions of equation (\ref{eqn:BSE-GW1}) come in pairs: excitation energies $\omega_k$
with eigenvectors $({\bf x}_k,{\bf y}_k)$, and de-excitation energies
$-\omega_k$ with eigenvectors $({\bf y}_k^\ast,{\bf x}_k^\ast)$.

The simplification in the  BSH, $H$, defined by the $n \times n$ symmetric diagonal  block $A$
is called the Tamm-Dancoff (TDA) approximation. In what follows, we are interested in
the TDA spectral problem, 
$$
A {\bf u}_k = \lambda_k {\bf u}_k,\quad k=1,\ldots,n,
$$ 
providing good approximations to $\omega_k,{\bf x}_k$.

 In general, methods for solving partial eigenvalue problems for matrices with a special structure 
as in the BSE setting are conceptually related to the 
approaches for Hamiltonian matrices \cite{BeFa:97,BeMeXu:98,FaKre:06,BGFa:15},
particularly to those based on minimization principles \cite{BaiLi:12,BaiLi:13}.
A structured Lanczos algorithm for estimation of the optical absorption spectrum was described in 
\cite{ShJoLiYaDeLo:16}.
Various structured eigensolvers tailored for electronic structure calculations are discussed 
in \cite{RoGeSaBa:08,RoLuGa:10,DeSaStJaCoLo:12,NaPoSaad:13,LinSaadYa:15,ShJoYaDeLo:16}.

\subsection{Density of states for symmetric matrices}\label{ssec:DOS_def}

To fix the idea, we first consider the case of symmetric matrices.
Following \cite{LinSaadYa:15}, we use the simple definition of the DOS for symmetric matrices
\begin{equation} \label{eqn:DOS}
 \phi(t)= \frac{1}{n}\sum\limits_{j=1}^{n} \delta(t-\lambda_j),\quad t,\lambda_j\in [0,a],
\end{equation}
where $\delta$ is the Dirac   distribution and the $\lambda_j$'s are the eigenvalues 
of $A=A^T$  ordered as  $\lambda_1 \leq \lambda_2 \leq \cdots \leq \lambda_n$.

Several classes of blurring approximations to $\phi(t)$ are used in the literature.
One can replace each Dirac-$\delta$ by a Gaussian function with width $\eta>0$, i.e.,
\[
\delta(t) \rightsquigarrow
 g_\eta(t)= \frac{1}{\sqrt{2 \pi}\eta}\exp{\left(-\frac{t^2}{2 \eta^2}\right)},
\]
where the choice of the regularization parameter $\eta$ depends on the particular problem setting.
As a result, (\ref{eqn:DOS}) can be approximated by 
\begin{equation} \label{eqn:DOS_gauss}
 \phi(t)\approx \phi_\eta(t):= \frac{1}{n} \sum\limits_{j=1}^{n} g_\eta(t -\lambda_j),
\end{equation}
on the whole energy interval $[0,a]$. 

We may also replace each Dirac-$\delta$ by a Lorentzian, i.e.,
\begin{equation} \label{eqn:Delta_Lorentz}
\delta(t) \rightsquigarrow
 L_\eta(t):= \frac{1}{\pi} \frac{\eta}{t^2 + \eta^2} = 
 \frac{1}{\pi} \mbox{Im}\left( \frac{1}{t- i \eta }\right),
\end{equation}
so that an approximate DOS can be written as
\begin{equation} \label{eqn:DOS_Lorentz}
 \phi(t)\approx \phi_\eta(t):= \frac{1}{n} \sum\limits_{j=1}^{n} L_\eta(t -\lambda_j).
\end{equation}
When $\eta\to 0_+$, both Gaussians and Lorentzians converge to the 
Dirac distribution, i.e.,
\[
 \lim\limits_{\eta\to 0_+} g_\eta(t) = \lim\limits_{\eta\to 0_+} L_\eta(t)=\delta(t).
\]
However, they exhibit different features of the approximant for small $\eta >0$.
In the case of Gaussians, one expects  a  sharp resolution of the spectral peaks, while 
the Lorentzian based representation aims to resolve better the global landscape of $\phi(t)$.

Both functions $\phi_\eta(t)$ and $L_\eta(t)$ are continuous, 
hence, they can be discretized by sampling on  a  fine grid $\Omega_h$ over $[0,a]$. 
In the following, we use the uniform cell-centered $N$-point grid with 
the mesh size $h=a/N$.

In what follows, we focus on the case of Lorentzian blurring, which will be motivated
later on, and apply it to the TDA approximation of the BSE problem 
(see \S \ref{ssec:BSE_setting} below). 
We use the simplified block-diagonal 
plus low-rank approximation to the matrix $A$, see \cite{BeKhKh_BSE:15,BDKK_BSE:16}, 
which allows efficient explicit representation of the shifted inverse matrix.

The numerical illustrations in \S \ref{ssec:DOS_def} represent the DOS
for the  H$_2$O molecule and H$_2$ chains  broadened by 
Gaussians (\ref{eqn:DOS_gauss}). The data corresponds to the 
reduced basis approach via rank-structured approximation applied to the symmetric
TDA model \cite{BeKhKh_BSE:15,BDKK_BSE:16} described by the matrix block $A$ of 
the full BSE system matrix. 

\begin{figure}[htb]
\centering
\includegraphics[width=7.0cm]{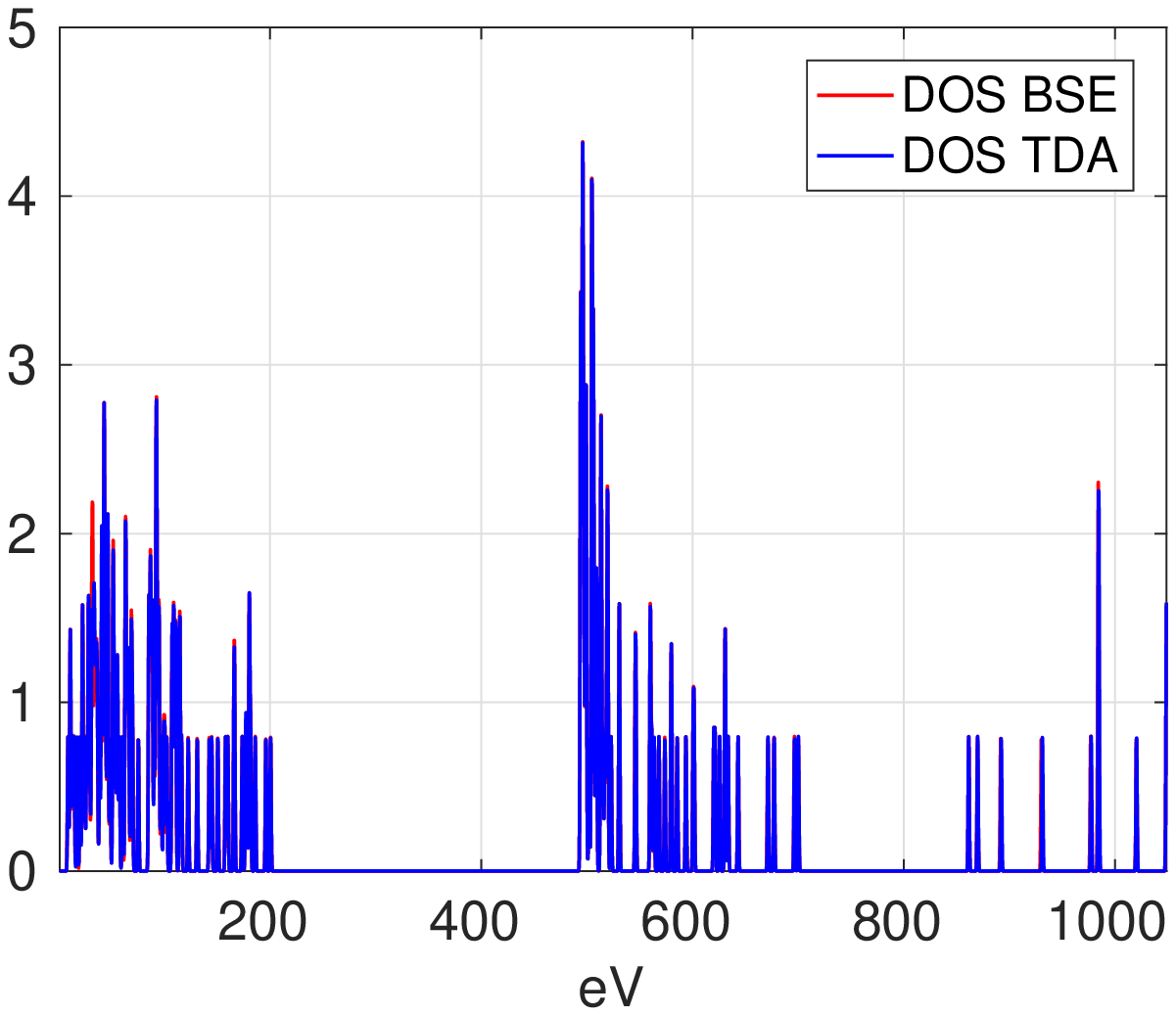} 
\includegraphics[width=7.0cm]{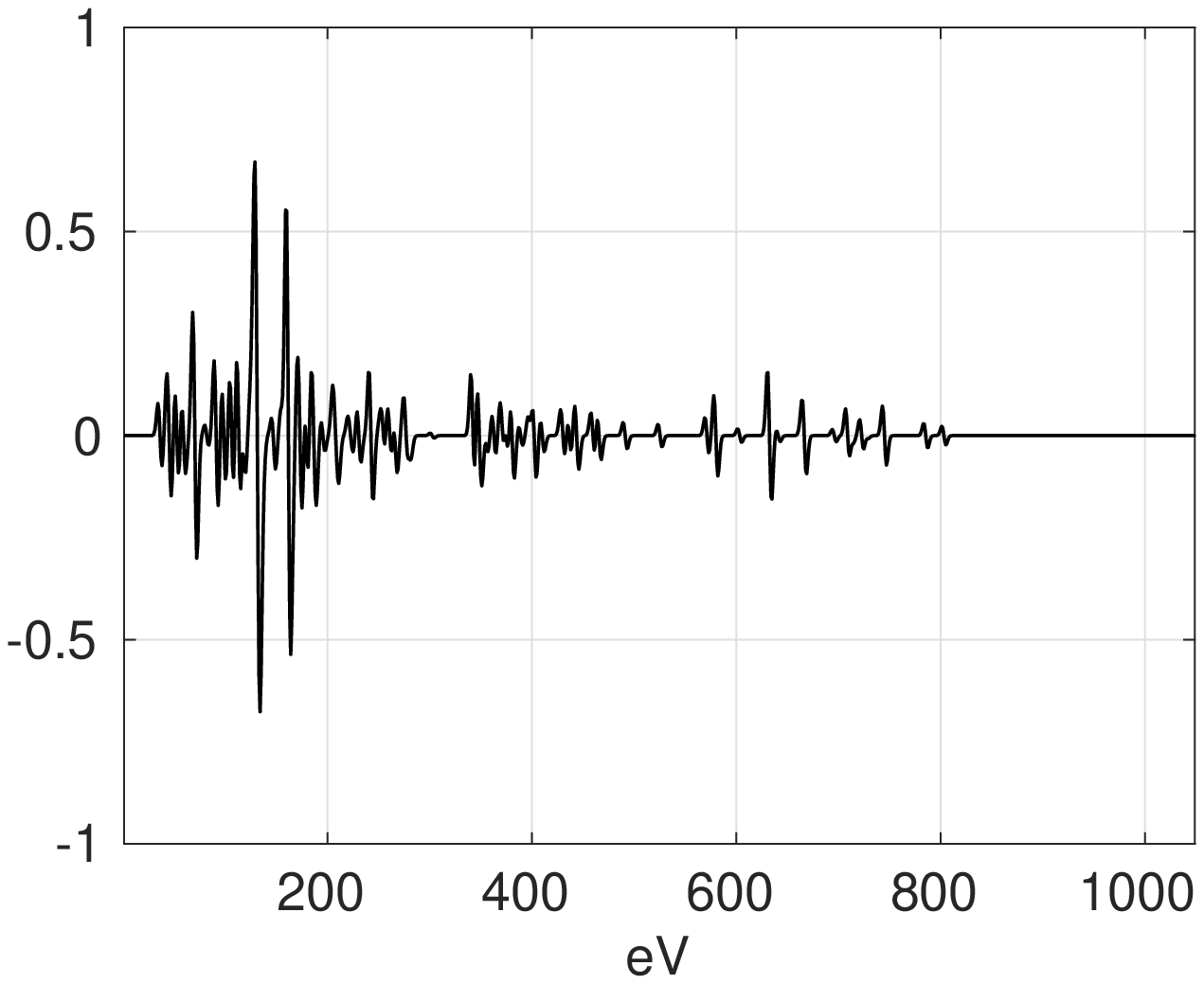}
\caption{\small DOS for H$_2$O, $\eta=0.5$: exact BSE vs. TDA on the full spectrum (left), 
the absolute error (right). }
\label{fig:BSE_vs_TDA_H2O}
\end{figure}

It was numerically demonstrated in \cite{BeKhKh_BSE:15} that
the spectrum of the TDA model provides a good approximation to the spectrum of the full BSE Hamiltonian. 
The difference between the two is on the order of $10^{-2}$ for molecules of moderate size.

Figure \ref{fig:BSE_vs_TDA_H2O}, left, compares the DOS for the H$_2$O molecule 
calculated via the eigenvalues of the full BSE Hamiltonian and those of the TDA approximation,
while on the right we display the corresponding maximum error.

Figure \ref{fig:BSE_vs_TDA_H2O_zoom}, left, compares the same DOS calculations but 
zoomed on the first compact energy interval $[0,40]$ eV. 
The red curve corresponds to the full BSE data, and the blue one represents the TDA case.
The figure on the right displays  the corresponding maximum error.  
   \begin{figure}[htb]
\centering
\includegraphics[width=7.0cm]{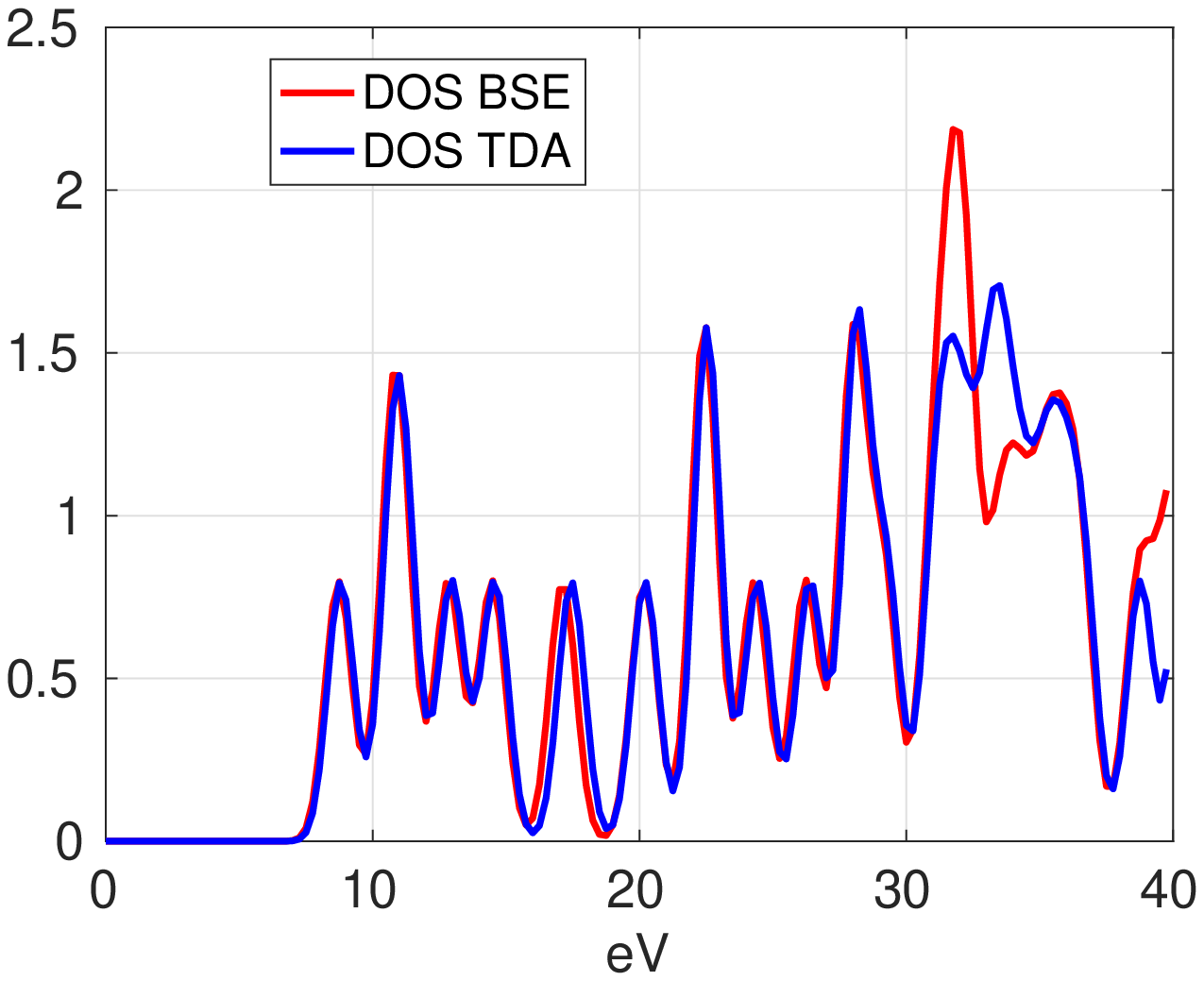} 
\includegraphics[width=7.0cm]{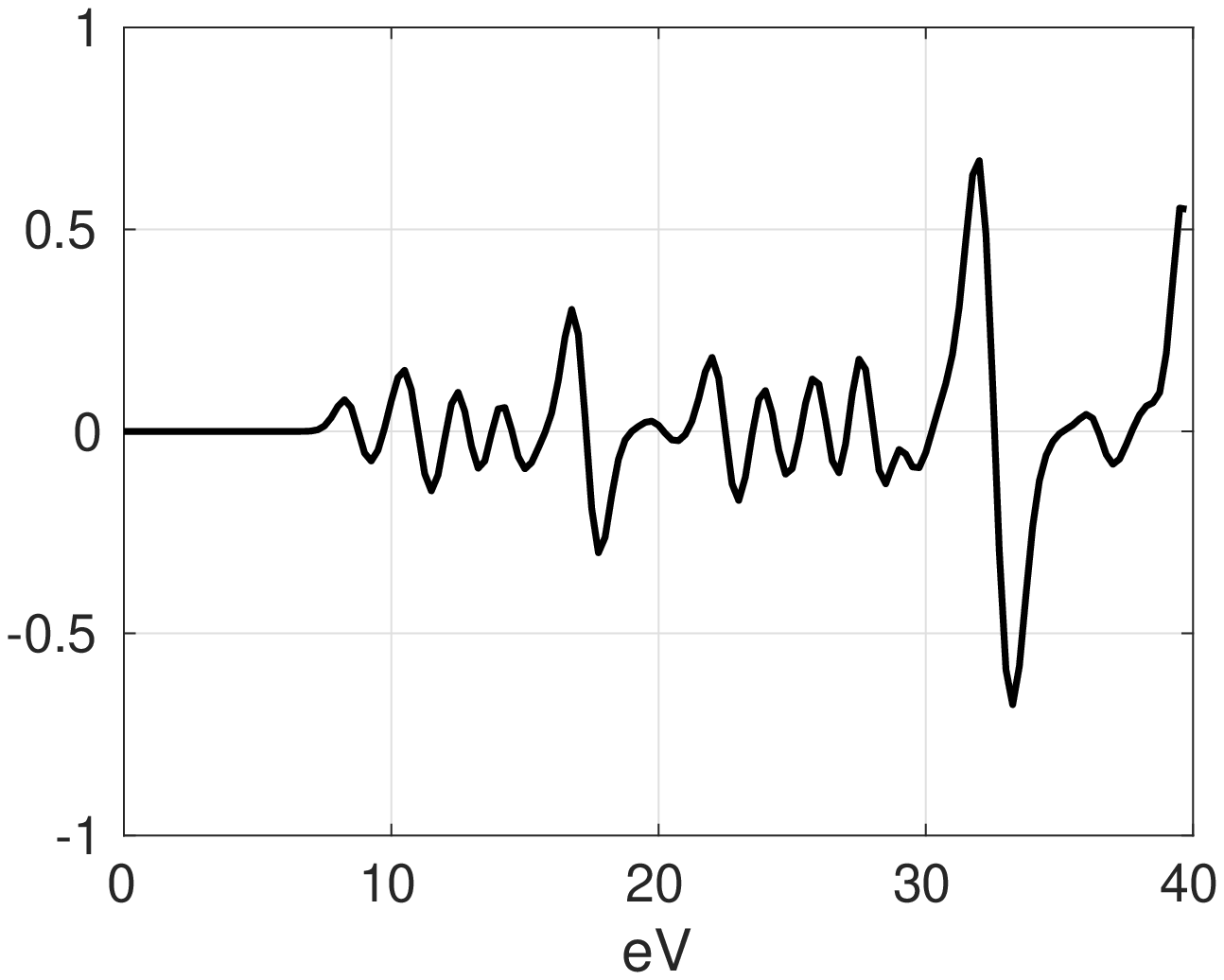}
\caption{\small DOS for H$_2$O on the energy sub-interval $[0,40]$: exact BSE vs. TDA (left), 
and the error (right). }
\label{fig:BSE_vs_TDA_H2O_zoom}
\end{figure}

Figure \ref{fig:Simp_vs_TDA_H2O}, left, represents the DOS for H$_2$O 
computed by using the exact TDA spectrum (blue) 
and its approximation based on a simplified model obtained via low-rank approximation to $A$
(red), while the right figure shows the relative error.

Figures \ref{fig:H_lattice} presents the DOS for   H$_{16}$ (left) and H$_{32}$ (right) 
chains of Hydrogen atoms.
We observe the essential similarity in the shapes (only the amplitude is changing) which is 
apparently a consequence of quasi-periodicity of the system. 
\begin{figure}[htb]
\centering
\includegraphics[width=7.0cm]{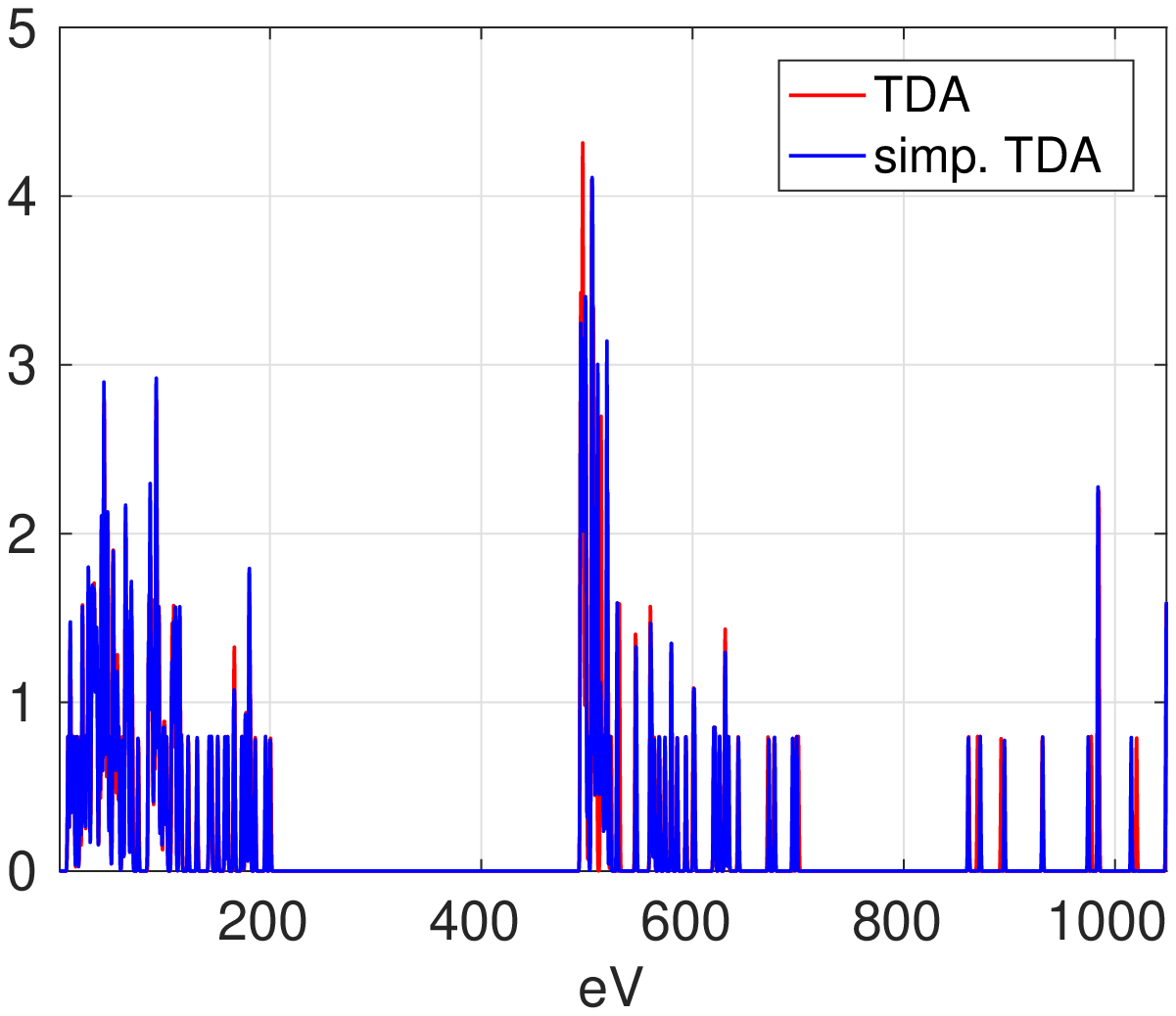} 
\includegraphics[width=7.0cm]{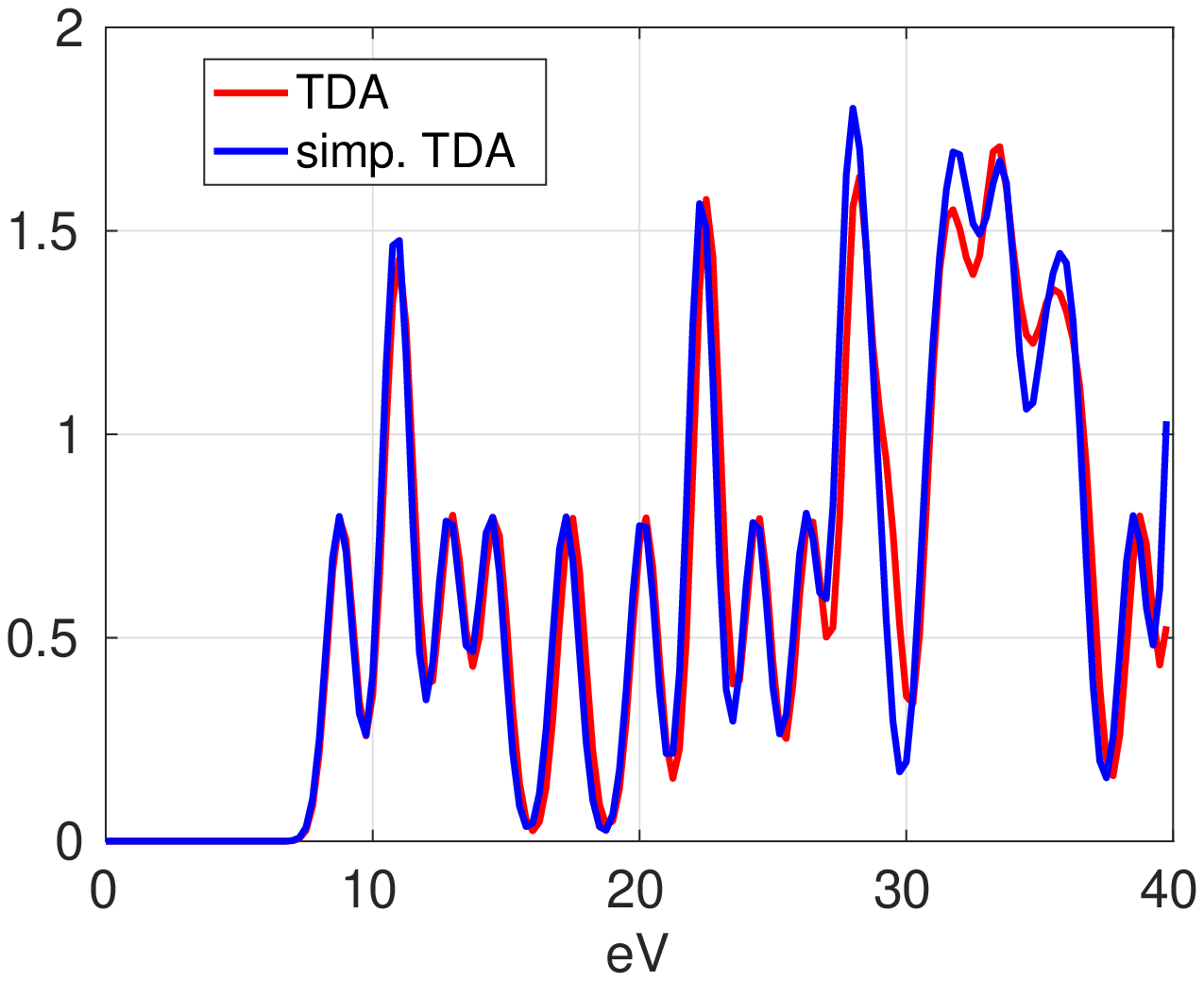}
\caption{\small DOS for H$_2$O. Exact TDA vs. simplified TDA (left), 
zoom of the small spectral interval (right).}
\label{fig:Simp_vs_TDA_H2O}
\end{figure}

\begin{figure}[htb]
\centering
\includegraphics[width=7.0cm]{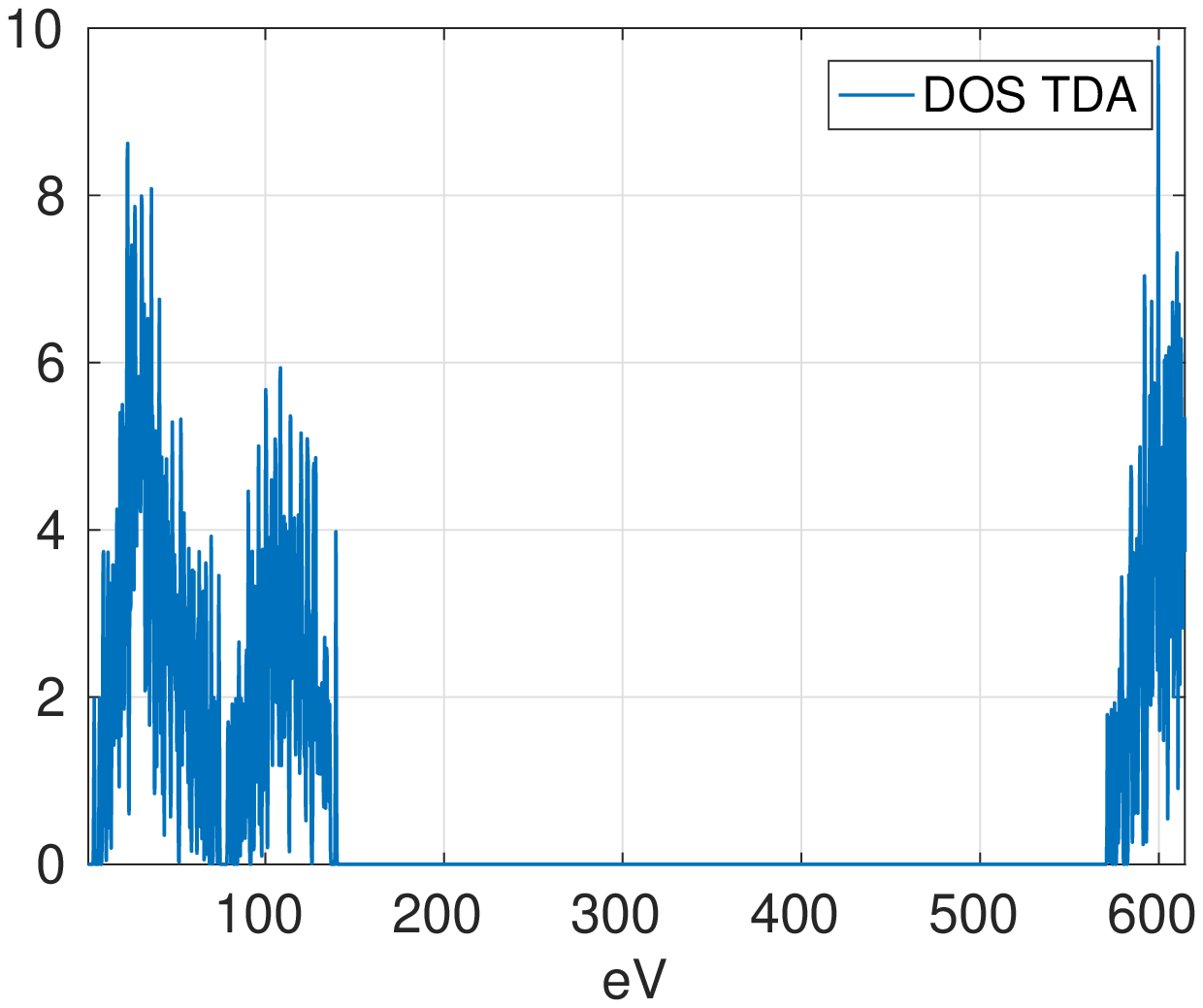} 
\includegraphics[width=7.0cm]{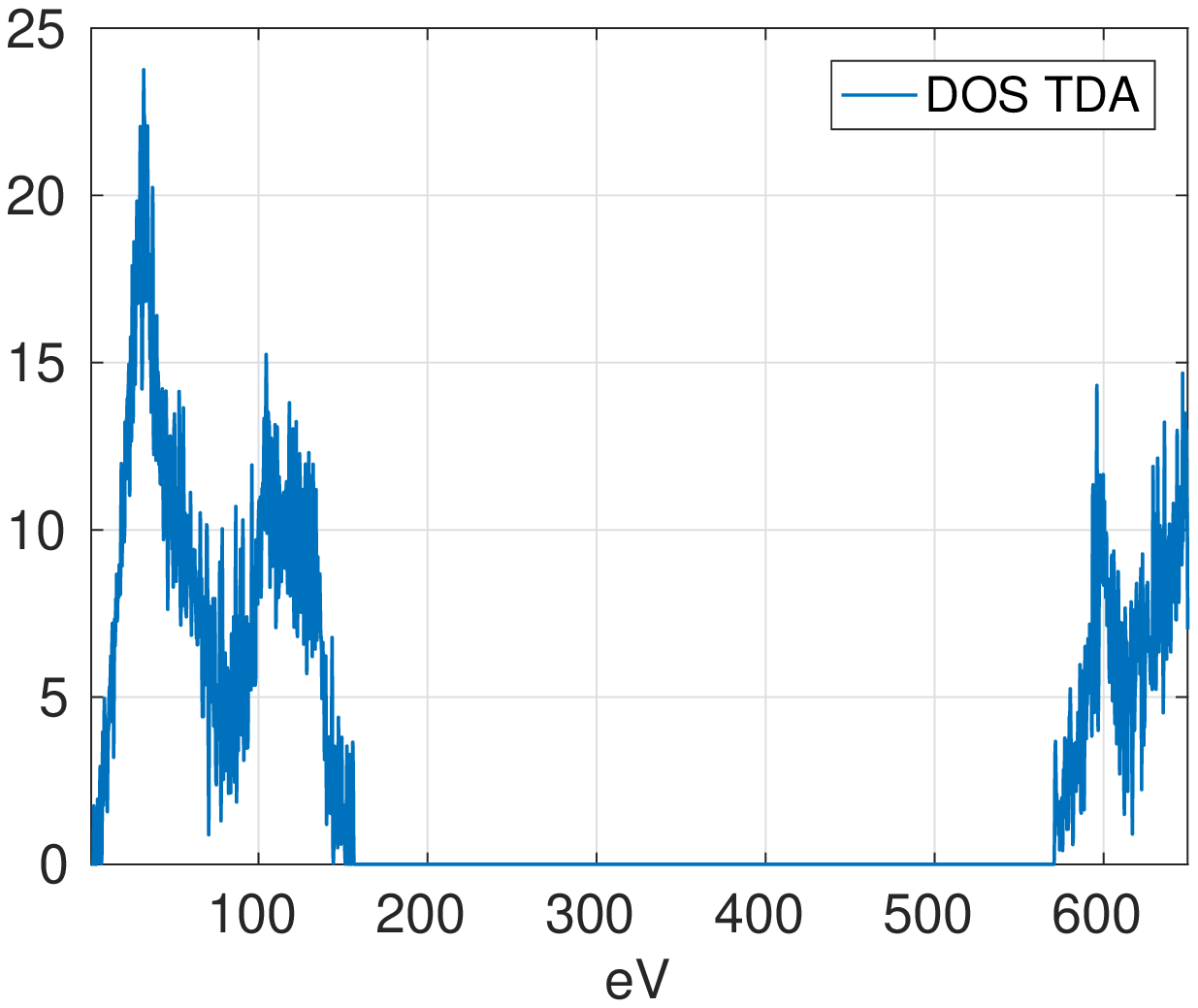}
\caption{\small DOS for H$_{16}$ (left) and H$_{32}$ (right) chains of Hydrogen atoms.}
\label{fig:H_lattice}
\end{figure}
The rank-structured approach to calculation of the molecular absorption spectrum in 
the case of full BSE is sketched in \S\ref{sec:BSE_case}. This topic
will be addressed elsewhere.

\subsection{General description of the existing computational schemes}

One of the commonly used approaches to the numerical approximation of both 
functions $g_\eta(t)$ and $L_\eta(t)$ 
is based on the construction of   certain polynomial or fractional polynomial 
interpolants whose evaluation at each sampling point $t_k$ requires the solution 
of  a  large linear system with the BSE/TDA matrix, i.e., remains expensive. 

In the case of Lorentzian broadening (\ref{eqn:DOS_Lorentz}) the regularized  DOS
takes  the  form 
\begin{equation} \label{eqn:DOS_Loren_ImTr}
\phi(t)\approx \phi_\eta(t):= 
 \frac{1}{n\pi} \sum\limits_{j=1}^{n}\mbox{Im}\left( \frac{1}{(t-\lambda_j) - i \eta }\right)=
 \frac{1}{n\pi} \mbox{Im}\, \mbox{Trace}[(tI-A - i \eta I)^{-1}].
\end{equation}
To keep real-valued arithmetics, likewise, we can write the latter in the form
\begin{equation} \label{eqn:DOS_Loren_RTr}
 \phi_\eta(t):=\frac{1}{n\pi} \sum\limits_{j=1}^{n}\frac{\eta}{(t-\lambda_j)^2 + \eta^2} =
 \frac{1}{n\pi}\mbox{Trace}[ ( (t I - A)^2 + \eta^2 I)^{-1}].
\end{equation}
In both cases the task of  computing  the approximate DOS $\phi_\eta(t)$ reduces to approximating 
the trace of the matrix resolvent 
$$
(tI-A - i \eta I)^{-1}\quad  \mbox{or} \quad ((t I - A)^2 + \eta^2 I)^{-1}.
$$
Here,  the price to pay for real-valued arithmetics is to 
address the more complicated low-rank structure in $(t I - A)^2$. 

The traditional approach \cite{LinSaadYa:15} 
to approximately computing the traces of the matrix-valued analytic function $f(A)$ reduces
this task to the estimation of the mean of $v_m^T f(A) v_m$ over a sequence of random 
vectors  $v_m$, $m=1,\ldots,m_r$,   satisfying certain condition (see \cite{LinSaadYa:15}, Theorem 3.1). 
That is, $\mbox{Trace}[f(A)]$ is approximated by
\begin{equation} \label{eqn:DOS_Trace_f(A)}
 \mbox{Trace}[f(A)]\approx \frac{1}{m_r}\sum\limits_{m=1}^{m_r}v_m^T f(A) v_m.
\end{equation}
The calculation of (\ref{eqn:DOS_Trace_f(A)}) for 
\begin{equation} \label{eqn:ReIm_resol}
f_1(A)=(tI-A - i \eta I)^{-1} \quad  \mbox{or} \quad f_2(A)= ((t I - A)^2 + \eta^2 I)^{-1}
\end{equation}
reduces to solving  linear  systems in the form of
\begin{equation} \label{eqn:DOS_Trace_syst_compl}
(tI - i \eta I -A)x= v_m \quad  \mbox{for} \quad m=1,\ldots,m_r,
\end{equation}
or 
\begin{equation} \label{eqn:DOS_Trace_syst_real}
 (\eta^2 I+(tI-A)^2 )x= v_m \quad  \mbox{for} \quad m=1,\ldots,m_r.
\end{equation}
These linear systems need to be solved for many target points $t=t_k\in [a,b]$ in 
the course of a chosen interpolation scheme. 

In the case of rank-structured matrices $A$, the solution of 
equations (\ref{eqn:DOS_Trace_syst_compl}) or (\ref{eqn:DOS_Trace_syst_real}) 
can be implemented with a lower cost.
However, even in this favorable situation one requires a relatively large
number $m_r$ of stochastic realizations to obtain satisfactory mean value approximation.  
The convergence rate is expected to be on the order of $O(1/\sqrt{m_r})$.
On the other hand, with the limited number of interpolation points, 
the polynomial type of interpolation schemes applied to highly
non-regular shapes as shown, say, in Figure \ref{fig:Simp_vs_TDA_H2O} (left),
can  only provide limited resolution and is unlikely to reveal spectral gaps and 
many local peaks of interest.

\section{Fast evaluation of DOS for rank-structured matrices}
\label{sec:Rstruc_DOS}

\subsection{DOS by the trace of rank-structured matrix inverse}
\label{ssec:Rstruc_DOS_M_inv}

In what follows, we propose an approach that is based on evaluating the trace term
in \eqref{eqn:DOS_Loren_ImTr} directly (without stochastic sampling). 
This approach relies on the following two techniques:
\begin{itemize}
 \item[(A)]  using the low-rank BSE matrix structure as in \cite{BeKhKh_BSE:15},
 which allows for each fixed $t\in [0,a]$ the direct matrix inversion and computation 
 of the respective traces,
 
 \item[(B)] the low-rank QTT tensor interpolation of the function $L_\eta(t)$ sampled 
on a fine uniform grid $\{t_1,\ldots,t_M\}$ in the whole spectral interval $[0,a]$ 
or on some subinterval of $[0,a]$.
\end{itemize}

For the class of block-diagonal plus low-rank matrices
arising in the reduced model approach for BSE problem \cite{BeKhKh_BSE:15,BDKK_BSE:16},
we have (see \S\ref{ssec:BSE_setting} for more details)
\begin{equation} \label{eqn:Block_A}
 A = E + P Q^T, \quad \mbox{with} \quad P,Q\in \mathbb{R}^{n\times R},
 \quad  E =\mbox{blockdiag}\{B_0,D_0\},
\end{equation}
where the rank parameter $R$ is small compared to $n$, the full $n_B\times n_B$ matrix block $B_0$ 
is of size $n_B=O(n^\alpha)$, $0<\alpha < 1$, and $D_0$ is a diagonal matrix of size $n- n_B$. 

Notice that even in the case of structured matrices in (\ref{eqn:Block_A}) the traditional 
approach by  (\ref{eqn:DOS_Trace_f(A)}) leads to a sequence of
linear systems  (\ref{eqn:DOS_Trace_syst_compl}) to be solved 
many times in the course of stochastic sampling,
for each of many interpolation points $t\in [0,a]$.
 
In our approach, for the class of rank-structured matrices (\ref{eqn:Block_A}),
we propose to avoid stochastic sampling  in (\ref{eqn:DOS_Trace_f(A)})  by
introducing a direct scheme that allows us to evaluate the trace of matrices
$f_1(A)$ or $f_2(A)$ defined in (\ref{eqn:ReIm_resol}),
corresponding to the matrix resolvent in (\ref{eqn:DOS_Loren_ImTr}) and (\ref{eqn:DOS_Loren_RTr}),
respectively, by one-step straightforward matrix calculation.

To that end, let us first construct the reduced-model approximation to the matrix 
inverse $A^{-1}$ for the matrix in (\ref{eqn:Block_A}), 
where the block-diagonal part  $E(t)=\mbox{blockdiag}\{B(t),D(t)\}$  corresponds to
the case of (\ref{eqn:DOS_Loren_ImTr}), i.e., 
\begin{equation} \label{eqn:BlockBD}
 B(t)  = tI_B - i \eta I_B + B_0, \quad   D(t)   =tI_D - i \eta I_D +D_0.
\end{equation}
Here $B_0$ and $D_0$ denote the 
corresponding matrix blocks in the representation of the diagonal block $A$ in the initial 
BSE matrix, see (\ref{eqn:Block_A}), and $I_B,I_D$ denote the identity matrices 
 corresponding to  the respective index subsets. For the ease of exposition,
we further assume that the matrix size of the block $B$ in (\ref{eqn:BlockBD}) 
is bounded by $n_B=O(n^\alpha)$ with $\alpha\leq 1/3$. 
This assumption on the block size ensures the linear complexity scaling of our algorithm in the
matrix size $n $. 

In what follows, we use the notion ${\bf 1}_m$ for a length-$m$ 
vector of all ones, and $\odot$ for the Hadamard product of 
matrices.

The following  result asserts that the cost of trace calculations 
is estimated to be $O(n R)$. 
\begin{theorem}\label{thm:Trace_cost}
  Let the matrix family $A=A(t)$,   $t\in [0,a]$,  be given by (\ref{eqn:Block_A}), with 
 $$
 E=E(t)=\mathrm{blockdiag}\{B(t),D(t)\},
 $$
 where $B(t),D(t)$ are defined in (\ref{eqn:BlockBD}).
  Then the trace of the matrix inverse $A(t)^{-1}$ can be calculated explicitly by 
 \[
  \mathrm{trace}[A(t)^{-1}]= \mathrm{trace}[B(t)^{-1}] + \mathrm{trace}[D(t)^{-1}] - 
  {\bf 1}_n^T ( {U(t)} \odot {V(t)}) {\bf 1}_R,
 \]
 where ${U(t)}=E(t)^{-1}P K(t)^{-1}\in \mathbb{R}^{n\times R}$,   
 ${V(t)}= E(t)^{-1}Q\in \mathbb{R}^{n\times R}$,
 and 
 $$
 K(t)=I_R+ Q^T  E(t)^{-1}(t) P $$ 
 is a small $R \times R$ matrix.
 For fixed $t\in [0,a]$, assume that $n_B=O(n^\alpha)$ with $\alpha\leq 1/3$, 
 then the numerical cost is estimated by $O(nR^2)$.
 \end{theorem}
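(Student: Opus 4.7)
The plan is to apply the Sherman--Morrison--Woodbury (SMW) formula to the matrix $A(t)=E(t)+PQ^T$, then exploit the cyclic property of the trace and the block-diagonal form of $E(t)$ to extract the desired expression, and finally count operations.

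First, I would write
\[
A(t)^{-1} = E(t)^{-1} - E(t)^{-1} P \bigl(I_R + Q^T E(t)^{-1} P\bigr)^{-1} Q^T E(t)^{-1}
         = E(t)^{-1} - E(t)^{-1} P\, K(t)^{-1} Q^T E(t)^{-1},
\]
which is well defined as long as $K(t)$ is nonsingular (generically true for $t$ away from the spectrum of $A$, and in particular for the complex shift $t-i\eta$). Taking the trace and using that $E(t)=\mathrm{blockdiag}\{B(t),D(t)\}$ is block-diagonal, I get $\mathrm{trace}[E(t)^{-1}] = \mathrm{trace}[B(t)^{-1}] + \mathrm{trace}[D(t)^{-1}]$ immediately.

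Next, for the low-rank correction term I would apply the cyclic property to write
\[
\mathrm{trace}\bigl[E(t)^{-1} P K(t)^{-1} Q^T E(t)^{-1}\bigr] = \mathrm{trace}\bigl[U(t)\,V(t)^T\bigr],
\]
with the $n\times R$ factors $U(t)=E(t)^{-1}PK(t)^{-1}$ and $V(t)=E(t)^{-1}Q$ (using symmetry of $E(t)$). The identity $\mathrm{trace}(UV^T)=\sum_{i,j} U_{ij}V_{ij} = {\bf 1}_n^T(U\odot V){\bf 1}_R$ is a one-line check, and assembling gives exactly the claimed formula.

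The complexity bookkeeping is the only place where care is needed, and it uses the assumption $n_B=O(n^\alpha)$ with $\alpha\le 1/3$. Since $D(t)$ is diagonal, $D(t)^{-1}$ and $\mathrm{trace}[D(t)^{-1}]$ cost $O(n)$. Forming $B(t)^{-1}$ (or its diagonal, for the trace) costs $O(n_B^3)=O(n^{3\alpha})=O(n)$; applying $E(t)^{-1}$ to the $R$ columns of $P$ and of $Q$ costs $O(R(n_B^2+n))=O(Rn)$; the Gram-type product $Q^T E(t)^{-1}P$ needed to form $K(t)$ costs $O(R^2 n)$; inverting the small $R\times R$ matrix $K(t)$ is $O(R^3)$; and the two remaining products $U(t)=(E(t)^{-1}P)K(t)^{-1}$ and the final Hadamard-sum $\mathbf{1}_n^T(U\odot V)\mathbf{1}_R$ are $O(R^2 n)$ and $O(Rn)$ respectively. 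Summing, the dominant term is $O(nR^2)$.

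The only genuinely delicate point is verifying that handling $B(t)$ stays within the $O(nR^2)$ budget: one must not naively invert $B(t)$ by a dense $O(n_B^3)$ routine without invoking the hypothesis $\alpha\le 1/3$. Once that is recognized, the rest is routine SMW plus linear algebra; no deep estimate is needed.
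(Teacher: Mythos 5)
Your proposal is correct and follows essentially the same route as the paper: Sherman--Morrison--Woodbury applied to $A(t)=E(t)+PQ^T$, splitting $\mathrm{trace}[E(t)^{-1}]$ over the blocks $B(t)$ and $D(t)$, the identity $\mathrm{trace}(UV^T)={\bf 1}_n^T(U\odot V){\bf 1}_R$ for the rank-$R$ correction, and an operation count dominated by the $O(nR^2)$ terms under the hypothesis $n_B=O(n^\alpha)$, $\alpha\le 1/3$. (Minor remarks: the cyclic property you invoke is not actually needed since the correction term is already in the form $U(t)V(t)^T$, and your explicit attention to the nonsingularity of $K(t)$ and to the $O(R^2n)$ cost of the Gram product $Q^TE(t)^{-1}P$ is slightly more careful than the paper's own accounting.)
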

\begin{proof}
The analysis relies on the particular structure   of the matrix blocks.
Indeed, we use the direct trace representation for both rank-$R$ and block-diagonal matrices.
Our argument is based on the observation that the trace of a rank-$R$ matrix
$U(t) V(t)^T$, where $U(t),V(t)\in \mathbb{R}^{n\times R}$, $U(t)=[{\bf u}_1,\ldots , {\bf u}_R] $,
$V(t)=[{\bf v}_1,\ldots , {\bf v}_R] $, ${\bf u}_k, {\bf v}_k  \in \mathbb{R}^{n}$, 
can be calculated in terms of skeleton vectors by 
\[
 \mbox{trace}[U(t) V(t)^T]=\sum_{k=1}^R \langle {\bf u}_k, {\bf v}_k \rangle = 
 {\bf 1}_n^T ( U(t) \odot V(t)) {\bf 1}_R,   
\]
at the expense $O(R n)$.
For fixed $t$, define the rank-$R$ matrices  by  
\[
 U(t)=E(t)^{-1}P K(t)^{-1}, \quad V(t)= E(t)^{-1}Q,
\]
then the Sherman-Morrison scheme leads to the representation, 
see \cite{BDKK_BSE:16},
 \[
  A(t)^{-1}= \mbox{blockdiag}\{ B(t)^{-1},D(t)^{-1} \} - E(t)^{-1}P K(t)^{-1} Q^T  E(t)^{-1},
 \]
 where the last term simplifies to
 \[
  E(t)^{-1}P K(t)^{-1} Q^T  E(t)^{-1} = U(t) V(t)^T.
 \] 
Now we apply the above formula for the trace of a rank-$R$ matrix to obtain the 
desired representation.

The complexity estimate follows taking into account the bound on the size of matrix 
block $B$. Indeed, forming $U(t)$ involves solving the linear system 
$P_1(t) = U(t) K(t)$, for $U(t)$, where $P_1(t)$ is the pre-computed $E(t)^{-1}P$, which can be computed by
assumptions at the cost $O(nR)$. Here $P_1(t)$ would be re-used to compute $K(t)$ itself, and thus stored.
The cost for solving this system of equations is $2/3R^3$ (LU factorization of $K(t)$), 
plus $2nR^2$ for backward/forward solves. This completes the proof.
\end{proof}

The above representation has to be applied many times for 
calculating the trace of $E(t_m)^{-1}P K(t_m)^{-1} Q^T  E(t_m)^{-1}$  
at each fixed interpolating point $t_m$, $m=1,\ldots,M$.

Here, we notice that  the price to pay   for the real arithmetics in equation
(\ref{eqn:DOS_Trace_syst_real})  is that we compute  with squared matrices 
which,  however,    do not increase the asymptotic complexity since there is no
increase of the rank in the rank-structured representation of the system matrix, 
see the following Theorem \ref{thm:Trace_cost_real}.  
In our applications we do not expect a loss of numerical stability of the algorithm since
 the condition numbers of $E(t)$ are moderate.  In what follows we denote by
$[U,V]$ the concatenation of two matrices of compatible size.

\begin{theorem}\label{thm:Trace_cost_real}
 Given matrix $S=(tI-A)^2 + \eta^2 I$, where $A$ is defined by (\ref{eqn:Block_A}),
  then the trace of the real-valued matrix resolvent $S^{-1}(t)$ can be calculated explicitly by  
  \begin{equation} \label{eqn:Trace_realLR}
 \mathrm{trace}[S^{-1}]= \mathrm{trace}[E_{0}^{-1}] - 
 {\bf 1}_n^T (\overline{U} \odot \overline{V}) {\bf 1}_{2R},
\end{equation}
with $\overline{U}=E_0^{-1} \overline{P} K^{-1}\in \mathbb{R}^{n\times 2R}$, 
and $\overline{V} =  E_0^{-1} \overline{Q} \in \mathbb{R}^{n\times 2R} $,
where the real-valued block-diagonal matrix $E_0$ is given by
\[
 E_{0}(t)=\eta^2 I + t^2 I - 2t E + E^2=(\eta^2 + t^2) I 
+  \mathrm{blockdiag}[B^2- 2t B,D^2- 2t D],  
\]
and the rank-$2R$ matrices $\overline{P}, \overline{Q}$ are represented via concatenation
\[
 \overline{P}= [-2 tQ +E Q+  Q E + Q(Q^T Q),Q]\in \mathbb{R}^{n\times 2 R},\quad 
 \overline{Q}=[Q,EQ]  \in \mathbb{R}^{n\times 2 R}, 
\]
such that the small core matrix $K(t)\in \mathbb{R}^{2R \times 2R}$ takes the form 
$K(t)=I_R+ \overline{Q}^T  E_0^{-1}(t) \overline{P} $.

Assume that $n_B=O(n^\alpha)$ with $\alpha\leq 1/3$, 
 then the numerical cost is estimated by $O(n R^2)$ up to a low order term.
\end{theorem}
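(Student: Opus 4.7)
My plan is to reduce the theorem to an application of the Sherman-Morrison-Woodbury identity, exactly as in the proof of Theorem~\ref{thm:Trace_cost}, after identifying a block-diagonal-plus-low-rank splitting of $S$. Substituting $A = E + QQ^T$ (specializing (\ref{eqn:Block_A}) to $P=Q$, which is the form relevant for the TDA matrix per \S\ref{ssec:BSE_setting}) into $S = (tI-A)^2 + \eta^2 I$ and expanding, the polynomial-in-$E$ terms combine into $E_0 = (\eta^2 + t^2)I - 2tE + E^2$, which is block-diagonal because $E$ is, while the remainder is
\begin{equation*}
S - E_0 = -2t\, QQ^T + EQQ^T + QQ^T E + Q(Q^T Q) Q^T,
\end{equation*}
a sum of four rank-$R$ terms of total rank at most $2R$.

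Next I would factor this remainder as $\overline{P}\,\overline{Q}^T$ by pulling $Q^T$ and $(EQ)^T = Q^T E$ (using $E = E^T$) out on the right, which matches the $\overline{P},\overline{Q}$ in the statement (modulo the apparent stray ``$QE$'' summand, which should be absent since $Q\in\mathbb{R}^{n\times R}$ cannot be multiplied by $E$ on the right). The Sherman-Morrison-Woodbury identity then yields
\begin{equation*}
S^{-1} = E_0^{-1} - E_0^{-1}\overline{P}\, K^{-1}\,\overline{Q}^T E_0^{-1}, \qquad K = I_{2R} + \overline{Q}^T E_0^{-1}\overline{P}.
\end{equation*}
Taking traces and invoking the rank-$2R$ identity $\mathrm{trace}[\overline{U}\,\overline{V}^T] = \mathbf{1}_n^T(\overline{U}\odot\overline{V})\mathbf{1}_{2R}$ already used in Theorem~\ref{thm:Trace_cost}, with $\overline{U} = E_0^{-1}\overline{P} K^{-1}$ and $\overline{V} = E_0^{-1}\overline{Q}$, delivers (\ref{eqn:Trace_realLR}).

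For the complexity, I would exploit the structure $E_0 = \mathrm{blockdiag}\bigl[(\eta^2+t^2)I_B - 2tB_0 + B_0^2,\ (\eta^2+t^2)I_D - 2tD_0 + D_0^2\bigr]$: the $D$-block remains diagonal and the $B$-block has size $n_B = O(n^\alpha)$, so $B_0^2$ can be assembled and (LU-)factorized once at cost $O(n_B^3) = O(n)$ under $\alpha\leq 1/3$. Building the $n\times 2R$ factors $\overline{P},\overline{Q}$, applying $E_0^{-1}$ to each of their columns, forming and inverting the $2R\times 2R$ matrix $K$, and evaluating the Hadamard trace all cost $O(nR^2)$, yielding the stated bound up to lower-order terms.

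The main obstacle I anticipate is the clean identification of the rank-$2R$ structure: one must verify that the cubic-in-$Q$ term $Q(Q^TQ)Q^T$ can be absorbed into the $n\times 2R$ factorization without inflating the rank, and that the squaring $E^2$ preserves the block-diagonal form needed to invert $E_0$ cheaply. Both are immediate by inspection, and once settled the argument is a verbatim replay of the Sherman-Morrison-Woodbury plus Hadamard-trace pattern of Theorem~\ref{thm:Trace_cost}.
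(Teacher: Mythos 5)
Your proposal is correct and follows essentially the same route as the paper's own (much terser) proof: split $S=E_0+\overline{P}\,\overline{Q}^T$ into block-diagonal plus rank-$2R$ parts, apply Sherman--Morrison--Woodbury, use the Hadamard-product trace identity from Theorem~\ref{thm:Trace_cost}, and exploit $n_B=O(n^\alpha)$, $\alpha\le 1/3$, for the cost bound. You additionally supply the explicit expansion the paper omits and correctly flag the dimensionally inconsistent ``$QE$'' summand in $\overline{P}$ (and implicitly the $I_R$ versus $I_{2R}$ slip in $K$) as typos.
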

\begin{proof}
 Indeed, given the block-diagonal plus low-rank matrix $A$ in the form (\ref{eqn:Block_A}),
we obtain
\[
S=(tI-A)^2 + \eta^2 I = E_{0} + \overline{P}\, \overline{Q}^T,
\]
where the block-diagonal matrix $E_{0}$ and  the  rank-$2R$ matrix $\overline{P}\,\overline{Q}^T$ 
are defined as above.
Applying the Sherman-Morrison scheme as above to the block-diagonal plus rank-$2R$ matrix structure 
in $S$, the representation result follows.  Now we take into account that 
$$
\mathrm{trace}[E_{0}^{-1}]=\mathrm{trace}[(B^2- 2t B)^{-1}] + \mathrm{trace}[(D^2- 2t D)^{-1}],
$$ 
then the restriction on the size of the block $B$ proves the complexity bound similar 
to the argument in the proof of the previous theorem.  
\end{proof}

Based on  Theorems \ref{thm:Trace_cost} and \ref{thm:Trace_cost_real}, the calculations in  
item (A) can be implemented efficiently in both complex and real arithmetics. 
The following numerics demonstrates the efficiency of the DOS calculation 
for the rank-structured TDA matrix implemented in real arithmetics as
described by (\ref{eqn:Trace_realLR}) in Theorem \ref{thm:Trace_cost_real}.
\begin{table}[hbp]
 \begin{center}
 \begin{tabular}
[c]{|c|c|c|c|c|c|c|c|}%
\hline
Molecule & H$_2$O & NH$_3$ & H$_2$O$_2$ & N$_2$H$_4$ &C$_2$H$_5$OH & C$_2$H$_5$ NO$_2$ & C$_3$H$_7$ NO$_2$ \\
 \hline
 $n=N_{ov}$ & $180$ & $215$ & $531$ & $657$ & $1430$   & $3000$ & $4488$   \\
 \hline
Rank  $R$      &    $36$ & $30$ & $68$   & $54$ & $74$  & $129$  & $147$   \\
\hline
Total time $T$ (s)       & $6.7$ & $7.7$ & $33$ & $47$ & $219$   & $1084$ & $2223$   \\
\hline
 Scaled time $T/R^2$ (s) & $0.005$ & $0.008$ & $0.007$ & $0.017$ & $0.041$   & $0.065$ & $0.103$   \\
 \hline
\end{tabular}
\end{center}
\caption{\small Scaled times for the Algorithm in Theorem \ref{thm:Trace_cost_real}.}
\label{tab:ratio_NW2A}
\end{table} 

\begin{figure}[htb]
\centering
\includegraphics[width=7.0cm]{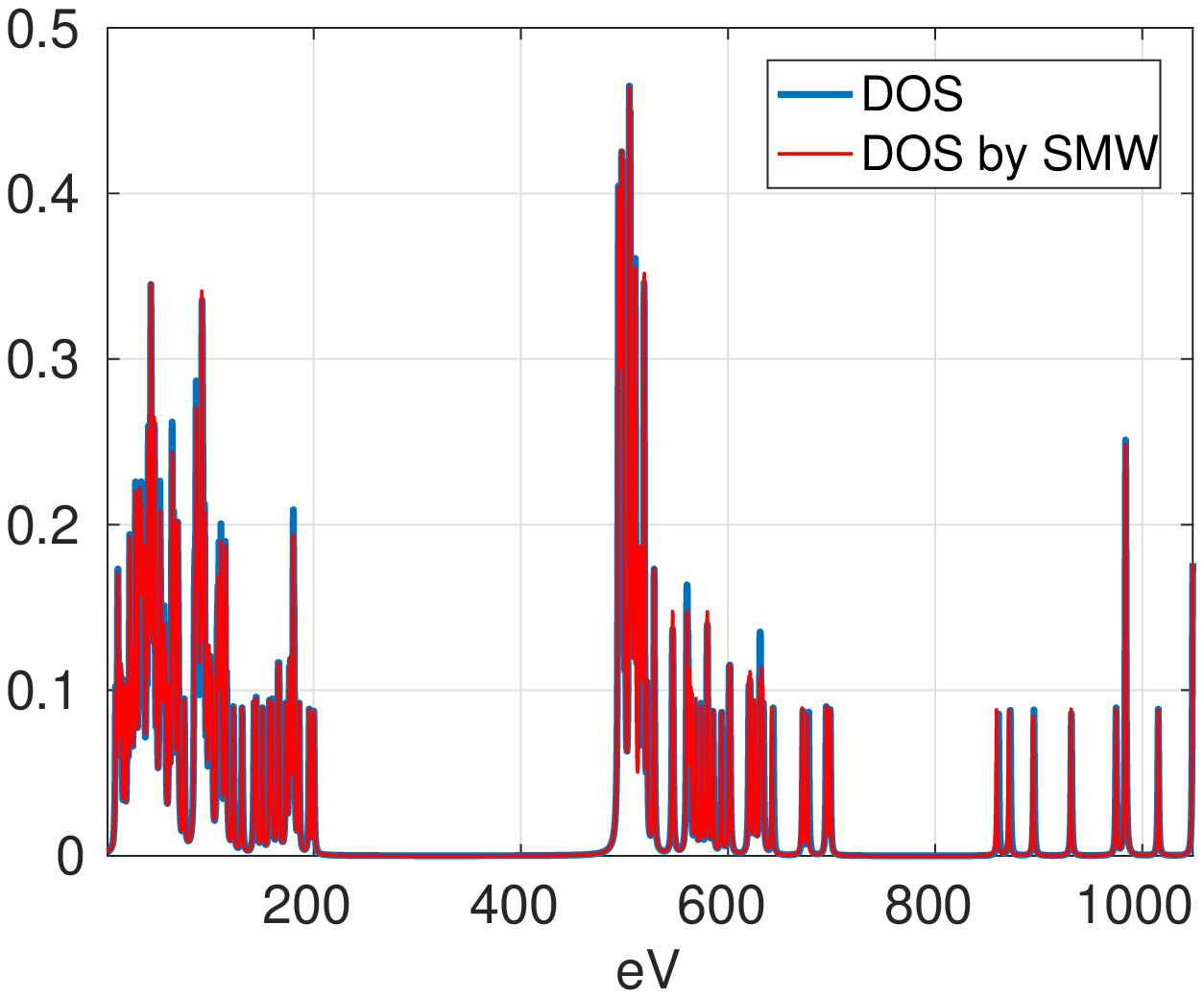}
 \includegraphics[width=7.0cm]{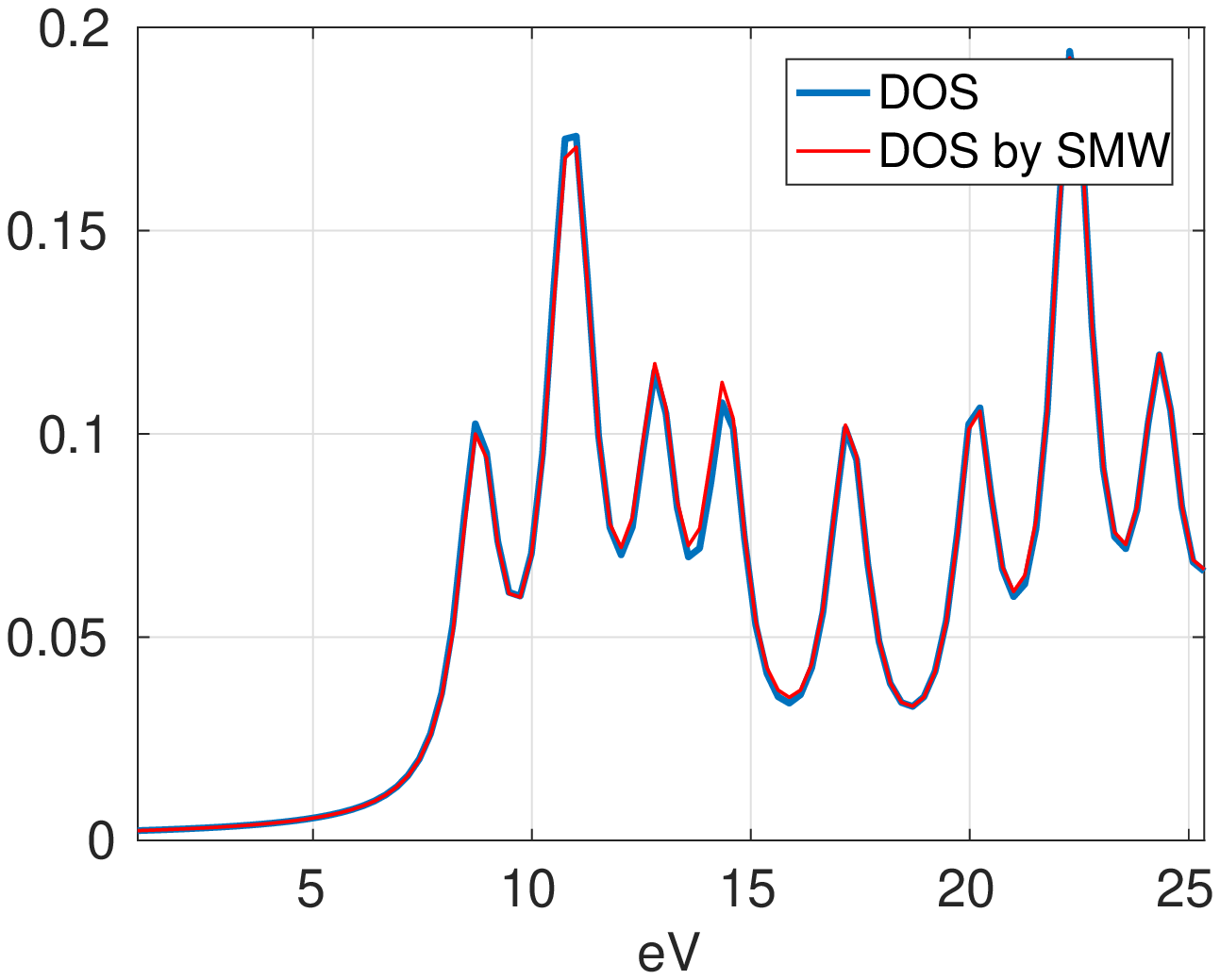}
\caption{\small Left: DOS for H$_2$O vs. its recovering by using the trace of 
matrix resolvents; Right: zoom on the small energy interval.}
\label{fig:DoS_TraceDirH2O}
\end{figure}
 \begin{figure}[htb]
\centering
\includegraphics[width=7.0cm]{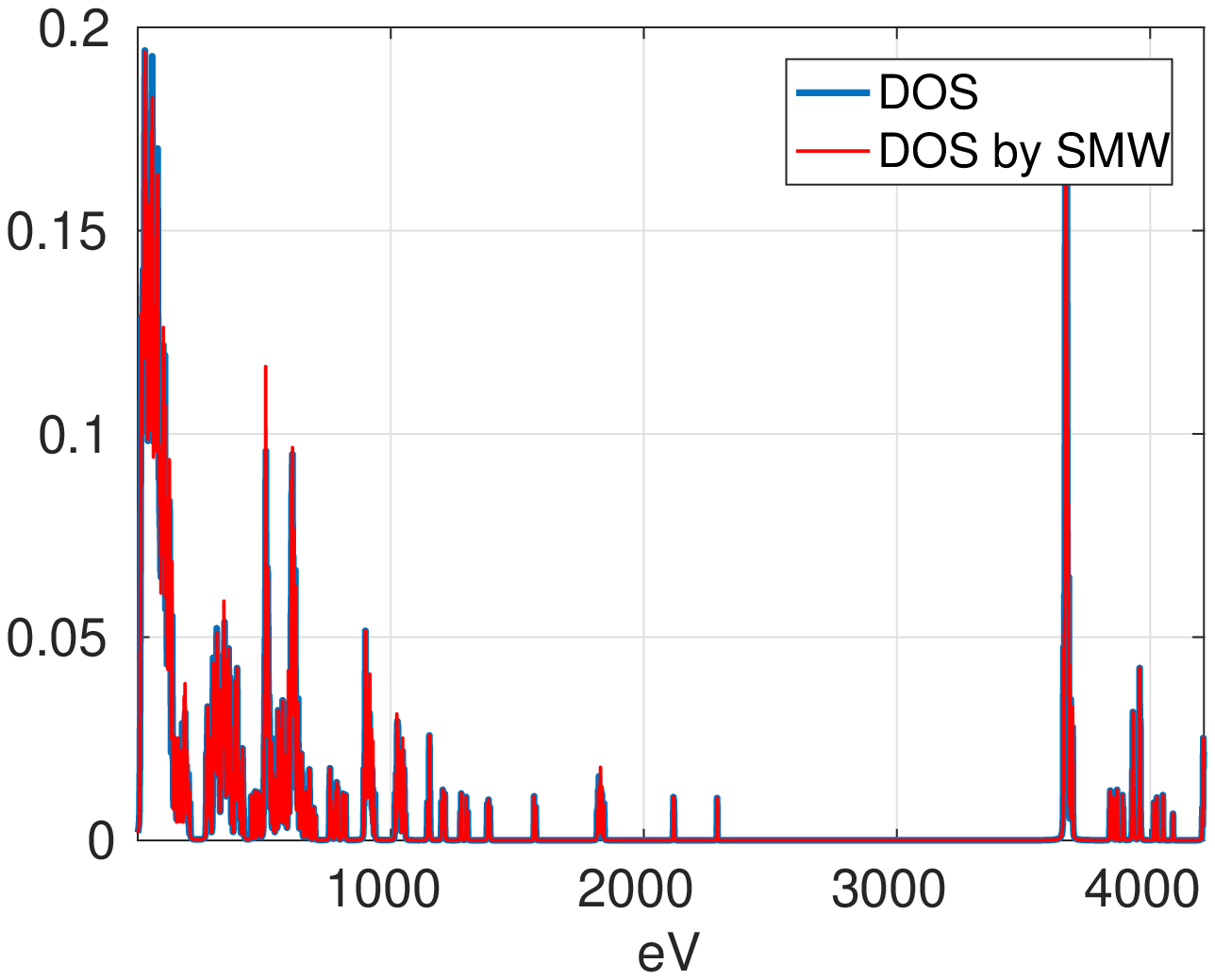}
 \includegraphics[width=7.0cm]{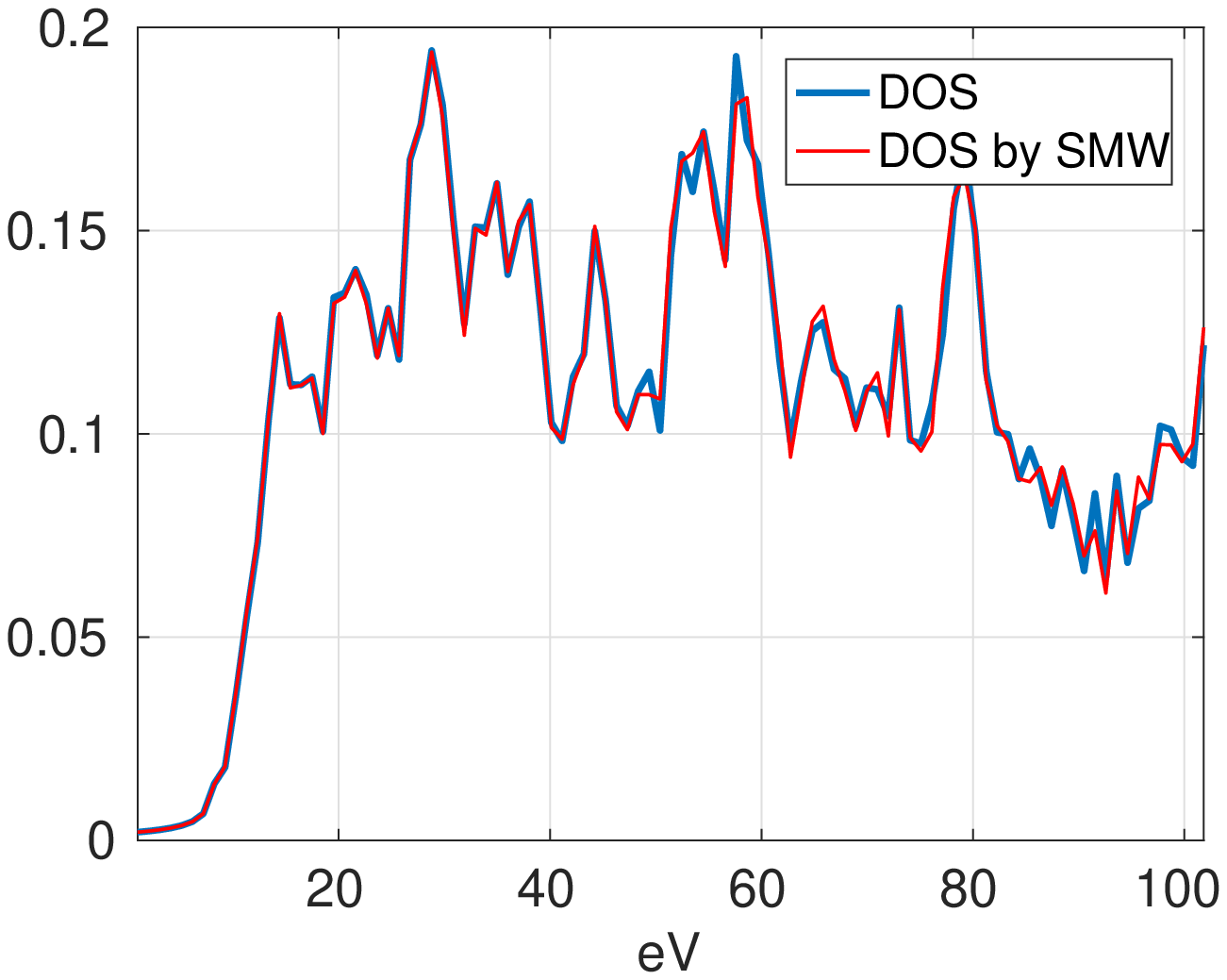}
 \caption{\small Left: DOS for Ethanol molecule vs. its recovering by using the trace of 
matrix resolvents; Right: zoom on the small energy interval.}
 \label{fig:DoS_TraceDirEtanol}
\end{figure}
 Figures \ref{fig:DoS_TraceDirH2O} and \ref{fig:DoS_TraceDirEtanol} demonstrate  
that using only the structure-based trace representation (\ref{eqn:Trace_realLR})
in Theorem \ref{thm:Trace_cost_real}, we obtain the approximation 
which resolves perfectly the DOS function (for the examples of H$_2$O and Ethanol molecules). 
The exact DOS is shown by the blue line, while the results of structure-based DOS calculation is 
indicated by the red line (we use the acronym ``SMW''  for the Sherman-Morrison-Woodbury
scheme). 
\begin{figure}[htb]
\centering
 \includegraphics[width=7.0cm]{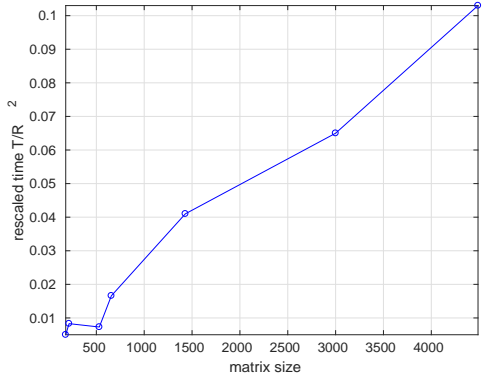}
\caption{\small Algorithm in Theorem \ref{thm:Trace_cost_real}: 
the  rescaled CPU time  $T/R^2$ versus $n$.}
\label{fig:DoS_trace_Times}
\end{figure}

Figure \ref{fig:DoS_trace_Times} shows  
the rescaled CPU time, i.e. $T/R^2$, where $T$ denotes the total CPU time for computing
the DOS by the algorithm  implied by Theorem \ref{thm:Trace_cost_real}.
This demonstrates almost linear complexity scaling of the algorithm in $n$, $O(R^2 n)$.
We applied the algorithm to  molecules of different system size 
$n$ (i.e. the size of TDA matrix) varying from $n=180$ till $n=4488$ 
(see Table \ref{tab:ratio_NW2A} for more details). 
In all cases the $N$-point representation grid with fixed $N=2^{14}$ was used.


We conclude that the algorithm based on  
representation (\ref{eqn:Trace_realLR}) demonstrates the perfect resolution of the DOS 
function at linear complexity in the system size which allows to treat  large  molecules.  

 The approach in item (B) requires fast trace calculations for many different values 
of parameter $t_m\in \tau=\{t_1,\ldots,t_M\} \subset [0,a]$ in the matrix resolvent. 
Finer resolution of the spectrum for large molecular systems leads to   a  considerable
increase of the number of samples $M$ that is practically equal to the grid size, $M=N$.
Hence, the total cost $O(M n R^2)$ may become prohibitively expensive since the trace 
computation for each fixed value of $t_m$ still requires complicated matrix operations 
(see  Theorems  \ref{thm:Trace_cost} and \ref{thm:Trace_cost_real}).

\subsection{ Calculating multiple traces of $A^{-1}$ with lower cost  }
\label{ssec:MultipleTr}

In this section, we describe a further enhancement scheme for fast multiple calculation of
traces on the large set of interpolation points.
We outline how it is possible to reduce the complexity of these calculations (reduced model)
by using  a  certain smoothness in $t$ in the parametric matrix resolvent by introducing the low rank
approximation of the large $n^2\times M$ matrix 
$$
\mathbb{E}(t)=[E(t_1)^{-1},\ldots,E(t_M)^{-1}]\quad \mbox{and}\quad 
\mathbb{K}(t)=[K(t_1)^{-1},\ldots,K(t_M)^{-1}]\in \mathbb{R}^{R^2\times M}
$$
obtained by concatenation of vectorized matrices $E(t_m)^{-1}$ and 
$K(t_m)^{-1}$, $m=1,\ldots,M$, respectively.
The idea is that 
\[
 E(t)^{-1}= \mbox{blockdiag}[P(t)^{-1},D(t)^{-1}] 
\]
defines an analytic matrix family on the spectral interval $t\in [0,a]$, and so is the family 
of core matrices $\{K^{-1}(t)\}$. This favorable property allows  
the model reduction via low rank approximation of the matrix families $\mathbb{E}(t)$ and 
$\mathbb{K}(t)$, $t\in \tau$. Suppose that the representations 
\[
 K(t)^{-1}=\sum\limits_{k=1}^{R_K} c_k(t) K_k
\]
and 
\[
 E(t)^{-1}= \mbox{blockdiag}[P(t)^{-1},D(t)^{-1}]= \sum\limits_{m=1}^{R_E} p_m(t) E_m
\]
are precomputed (this is an   additional low-rank approximation 
procedure which separates the parameter $t$),  
where $E_m=\mbox{blockdiag}[P_m,D_m]\in \mathbb{R}^{n\times n}$ and $K_k\in \mathbb{R}^{R\times R}$ 
do not depend 
on $t$, and $E_m$ inherits the block-diagonal structure that $E(t)^{-1}$ obeys.

We take into account that $Q$ does not depend on $t$, and 
plug the above decompositions in the main trace-term to obtain
\[
 \mbox{Trace}[E^{-1}Q K^{-1} Q^T  E^{-1}]= \mbox{Trace}
 \left[\sum\limits_{m=1}^{R_E} p_m(t) E_m \, Q \, 
 (\sum\limits_{k=1}^{R_K} c_k(t) K_k) \, Q 
 \sum\limits_{m'=1}^{R_E} p_{m'}(t) E_{m'}\right].
\]
Now it follows that
\[
 \mbox{Trace}[E^{-1}Q K^{-1} Q^T  E^{-1}]= 
 \sum\limits_{m=1}^{R_E} p_m(t) \sum\limits_{k=1}^{R_K} c_k(t)\sum\limits_{m'=1}^{R_E} p_{m'}(t)
  \mbox{Trace}[ E_m Q K_k Q E_{m'}],
 \]
where $K_k \in\mathbb{R}^{R\times R}$ is a small matrix, $Q \in\mathbb{R}^{n\times R}$,
$E_m =\mbox{blockdiag}[P_m,D_m]$ with diagonal $D_m$ and the full $n_P \times n_P$ matrix $P_m$,
such that $n_P=O({n}^\alpha)$. 

With these prerequisites, we pre-compute a set of  "time-independent" traces 
\begin{equation} 
\label{eqn:multi_trace}
T_{mkm'}= \mbox{Trace}[ E_m Q K_k Q E_{m'}], \quad m,m'=1,\ldots, R_E, \;\; k=1,\ldots, R_K,
\end{equation} 
and store the $R_E^2 R_K$ numbers $T_{mkm'}$ to obtain the cheap representation of the trace
in terms of only a scalar sum, 
\[
 \mbox{Trace}[E^{-1}Q K^{-1} Q^T  E^{-1}](t)=
 \sum\limits_{m=1}^{R_E}\sum\limits_{k=1}^{R_K}\sum\limits_{m'=1}^{R_E} p_{m}(t) c_k(t) p_{m'}(t) T_{mkm'}.
\]
The cost of precomputing each
trace-value $T_{mkm'}$ is estimated by $O(n^{3\alpha} R^2)$ as proven by Theorem \ref{thm:Trace_cost},
while the number of coefficients to be stored is about $O(R_E^2 R_K)$ and it is expected 
to be small or moderate.
With these data at hand, the evaluation of the required trace for the particular $t_\nu\in \tau$  
takes $O(R_E^2 R_K)$ scalar operations independently on $n$.

 Notice that the computations in (\ref{eqn:multi_trace}) are intrinsically parallel,
 which can be exploited on  modern computing hardware using multi-threading or distributed computing.

\section{QTT approximation of DOS}
\label{ssec:QTT_DOS}

In what follows, we discuss the QTT approximation of the DOS.
We also describe a tensor based   heuristic QTT approximation of the DOS 
by using only an incomplete set of sampling points, i.e., QTT representation by adaptive 
cross approximation (ACA) \cite{OselTyrt:2010,saOsel_cross:2011}.  
Furthermore, we derive the upper bound on the QTT ranks of the DOS by the Gaussians broadening.

\subsection{Quantized-TT approximation of function related vectors}
\label{ssec:QTT_Approx}

 In the case of large vector size $N$, the number of representation parameters for 
the corresponding high-order QTT tensor can be reduced to the logarithmic 
 scaling  $\mathcal{O}(\log N)$, which allows the QTT tensor interpolation  
 of the target $N$-vector by using only $\mathcal{O}(\log N)\ll N$ entries, 
 which are chosen adaptively  by the heuristic ACA algorithm \cite{OselTyrt:2010,saOsel_cross:2011}. 
 The accuracy of this kind of ``approximate interpolation'' is controlled 
 by the $\varepsilon$-truncation of the QTT rank parameters.  
In the present paper, we apply this approximation  technique  to long $N$-vectors 
 representing the DOS sampled over the fine representation grid $\Omega_h$.

The QTT-type approximation of an $N$-vector with $N=q^{d'}$, $d'\in \mathbb{N}$, $q=2,3,...$,
is defined as the tensor decomposition (approximation) in the TT or canonical format applied 
to a tensor obtained by the folding (reshaping) of the initial vector to a $d'$-dimensional 
$q\times \cdots \times q$ data array. The latter is thought of as an element of the 
multi-dimensional quantized tensor space $\mathbb{Q}_{{q},d'}= \bigotimes_{j=1}^{d'}\mathbb{K}^{q}$,
  $\mathbb{K}\in \{\mathbb{R},\mathbb{C}\}$, and $d'$ is the auxiliary dimension
 (virtual, in contrary to the real space dimension $d$)  parameter
that measures the depth of the quantization transform. A vector 
$
{ \bf x}=[x_i]_{i\in I}\in \mathbb{R}^N,
$
is reshaped to its multi-dimensional quantized image in $\mathbb{Q}_{q,d'}$ by $q$-adic folding, 
\[
\mathcal{F}_{q,d'}:{ {\bf x} \to \bf{X}}
=[x({\bf j})]\in \mathbb{Q}_{q,d'}, \quad {\bf j}=\{j_{1},\ldots,j_{d'}\}, 
\]
with  $j_{\nu}\in \{1,\ldots, q\}$ for $\nu=1,\ldots,d'$.
Here, for fixed $i$, we have $x({\bf j}):= x_i$, and $j_\nu=j_\nu(i)$ is defined via $q$-coding,
$
j_\nu - 1= C_{-1+\nu}, 
$
such that the coefficients $C_{-1+\nu} $ are found from the
$q$-adic representation  of $i-1$ (binary coding for $q=2$),
\[
i-1 =  C_{0} +C_{1} q^{1} + \cdots + C_{d'-1} q^{d'-1}\equiv
\sum\limits_{\nu=1}^{d'} (j_{\nu}-1) q^{\nu-1}.
\]
Assuming that for the rank-${\bf r}$ TT approximation of the quantized image ${ \bf{X}}$
there  holds $r_k \leq r$, $k=1,\ldots ,d'$, 
the complexity of  this  representation for the tensor ${ \bf{X}}$  
reduces to the logarithmic scale
$$
q r^2 \log_q N \ll N. 
$$ 
The computational gain of the QTT approximation is justified by the 
perfect rank decomposition proven in \cite{KhQuant:09} for 
a wide class of function-related tensors obtained by sampling the corresponding functions
over a uniform or properly refined grid. This class of functions includes
complex exponentials, trigonometric functions, polynomials 
and Chebyshev polynomials, as well as wavelet basis functions.
We refer to \cite{DKhOs-parabolic1-2012,osel-constr-2013,VeBoKh:Ewald:14,khor-survey-2014} 
for further results on QTT approximation of functional vectors and various applications.

In estimating the numerical complexity we use the average QTT rank further denoted by
$r_{qtt}$   calculated as follows,
\begin{equation}\label{eqn:av_rqtt}
 r_{qtt}= \sqrt{\frac{1}{d-1}\sum\limits^{d-1}_{k=1} r_k^2},
\end{equation}
where the QTT ranks $r_k$ are the TT ranks of the quantized image ${\bf X}$ of a vector.

As an example we present the basic results on the rank-$1$ (resp. rank-$2$)
QTT representation (with $q=2$) of the exponential (resp. trigonometric) vectors \cite{KhQuant:09}.
For given $N=2^{d'}$, and $z  \in \mathbb{C}$, the exponential $N$-vector, 
$
{ \bf z} :=\{z_n = z^{n-1}\}_{n=1}^N,
$
can be reshaped by the dyadic folding to the rank-$1$, $2^{\otimes d'}$-tensor,
\begin{equation}\label{eq exp-vect q}
\mathcal{F}_{2,d'}:  { \bf z} \mapsto  { \textbf{Z}}=
 \otimes_{p=1}^{d'} [1 \; z^{2^{p-1}}]^T  \in \mathbb{Q}_{{2},d'}.
\end{equation}
The number of representation parameters specifying the QTT image is reduced dramatically 
from $N$ to $2 \log_2 N $.

The trigonometric $N$-vector, 
$
 { \bf t}= \Im m({ \bf z}) :=\{t_n =  
 \sin(\omega (n-1))\}_{n=1}^N,\quad  \omega \in \mathbb{R},
$
can be reshaped by the successive dyadic folding 
\[
\mathcal{F}_{2,d'}:   { \bf t} \mapsto { \textbf{T}} \in \mathbb{Q}_{{2},d'},
\]
to the $2^{\otimes d'}$-tensor ${ \textbf{T}}$, which has both the canonical 
 and the QTT-rank equal to $2$,  in the complex and 
real arithmetics, respectively.  

The explicit rank-$2$ QTT-representation of the single $\sin$-vector in $\{0,1\}^{\otimes d'}$ 
(see \cite{dks-ttfft-2012,osel-constr-2013})
with $k_p=2^{p-1} i_p$, $i_p\in \{0,1\}$, reads
\begin{equation*}\label{eq sin-vect}
{\bf t} \mapsto {\textbf{T}}=\Im m ({\textbf{Z}})= 
[\sin\, \omega k_1 \cos\,\omega k_1] \otimes_{p=2}^{d'-1}
\left[
\begin{array}{c c}  \cos\,\omega k_p &-\sin\,\omega k_p \\  
\sin\,\omega k_p &\cos\,\omega k_p \end{array}
\right] \otimes \left[                                 
\begin{array}{c} \cos\,\omega k_{d'} \\ \sin\,\omega k_{d'} \end{array}
\right].
\end{equation*}
The number of representation parameters is $8 (d' -1)$. 
  A more detailed discussion of the QTT approximation 
for function related vectors can be found in \cite{KhQuant:09,khor-survey-2014}.

In cases when the exact low-rank QTT representation is not known, an
$\varepsilon$-approximation in the QTT format can be computed by using 
the standard TT multi-linear approximation tools \cite{Osel_TT:11}. 
As a first illustration, we consider the QTT approximation of the DOS for the 
1D finite difference Laplacian operator in $[0,\pi]$ with Dirichlet boundary 
conditions, $A=-\mbox{tridiag}\{1,-2,1\}\in \mathbb{R}^{n\times n}$,
discretized on the uniform grid of size $h=\pi/(n+1)$ with $n=2047$.
The corresponding eigenvalues are given by
\[
 \lambda_k=4\sin^2(\frac{\pi k}{2n}), \quad k=1,\ldots, n.
\]
Figure \ref{fig:DoS_QTTInterp_Lapl} represents the Lorentzian-DOS
and the corresponding approximation error for its QTT $\varepsilon$-interpolant with $r_{qtt}=5$, 
computed on the representation grid of size $N=2^{14}$.

\begin{figure}[htb]
\centering
\includegraphics[width=7.5cm]{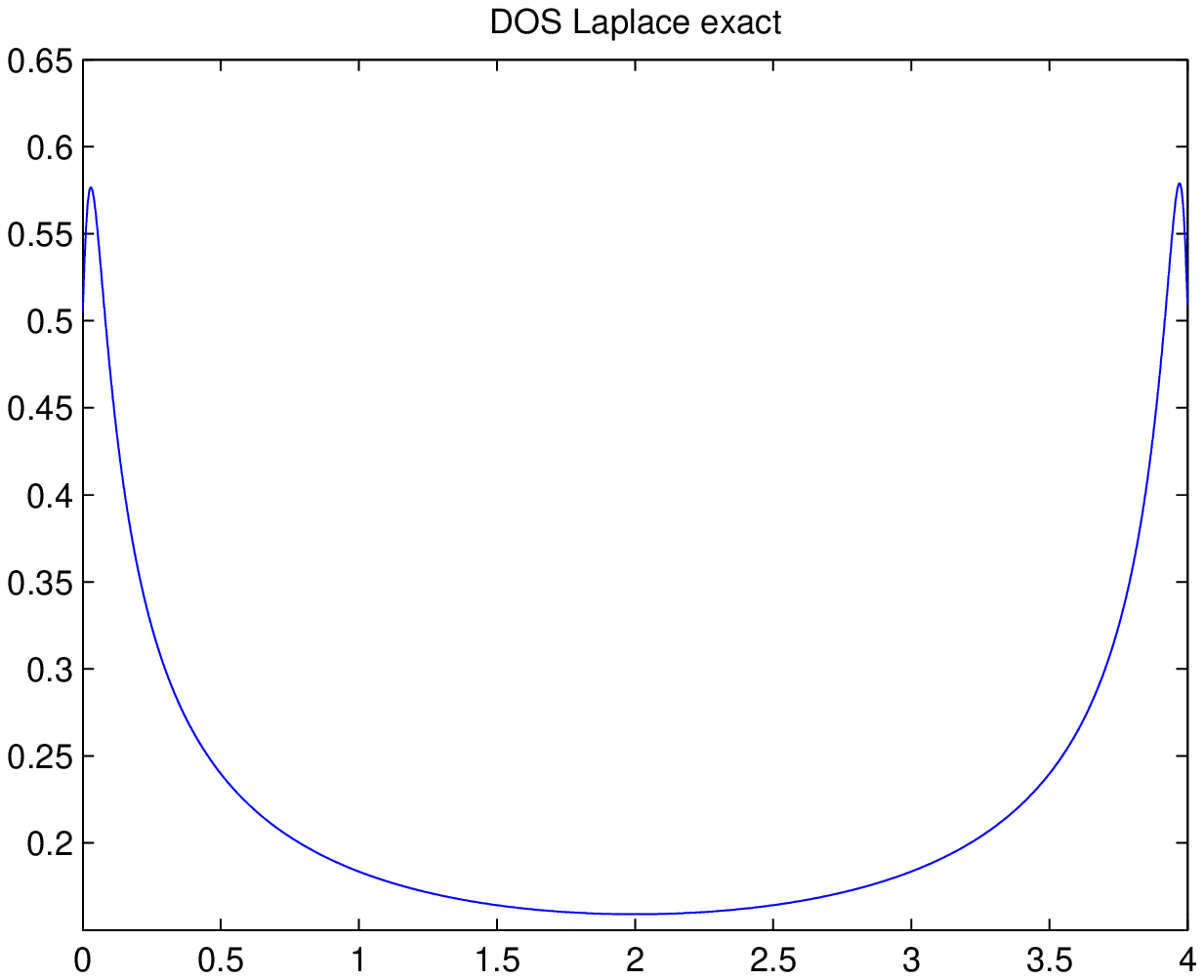}
\includegraphics[width=7.5cm]{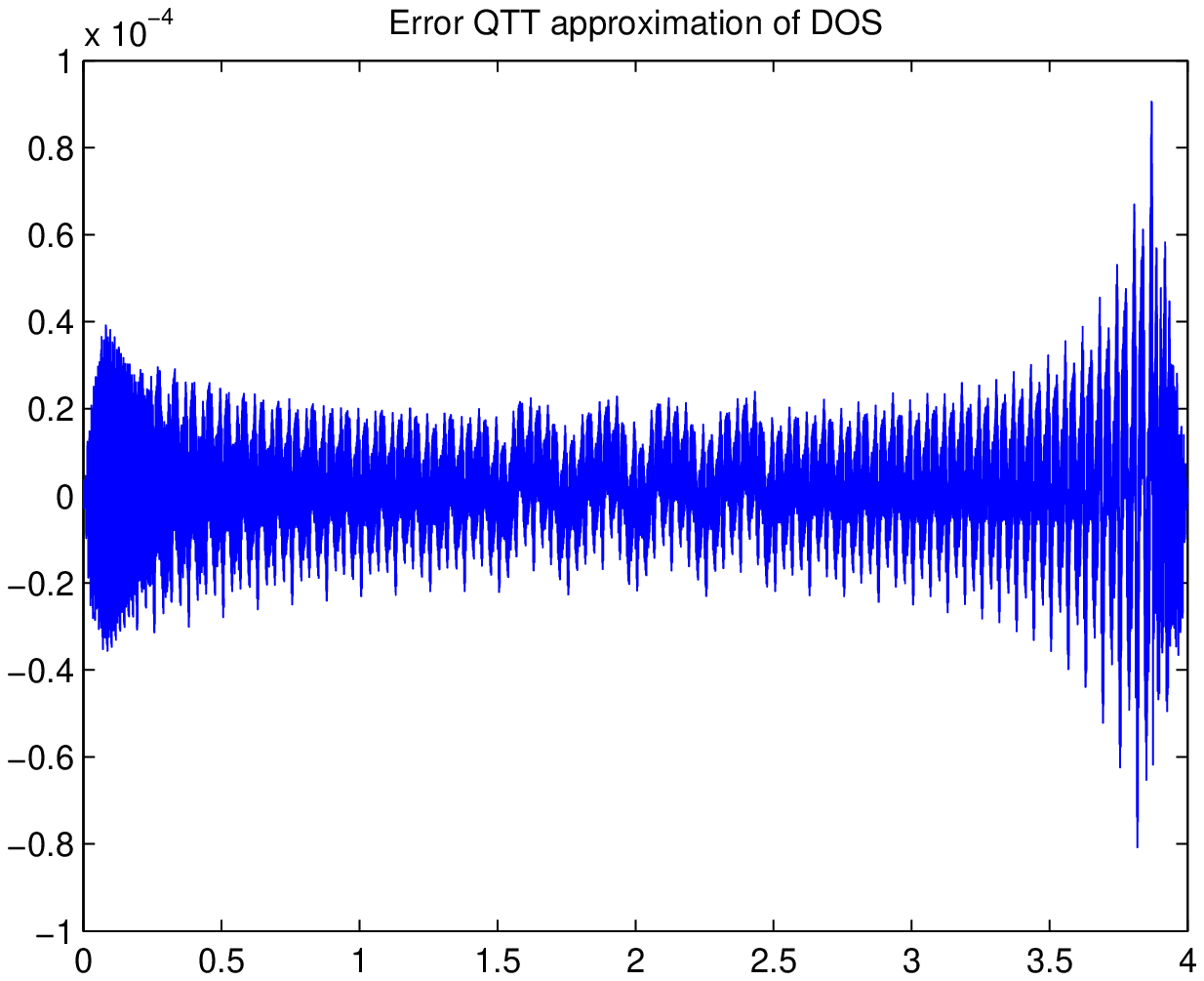}
\caption{\small  DOS for Laplacian (left),  and its QTT approximation with $r_{qtt}=5$ (right). }
\label{fig:DoS_QTTInterp_Lapl}
\end{figure}

In this paper we apply the QTT approximation method to the 
DOS regularized by Gaussians or Lorentzians and sampled on a fine representation grid of size $N=2^{d'}$.
The QTT approximant can be viewed as the rank structured $\varepsilon$-interpolant to the 
highly non-regular function $\phi_\eta$ regularizing the exact DOS.
In this case the application of traditional polynomial or trigonometric type 
interpolation is inefficient.

The QTT approach provides a good approximation to $\phi_\eta$ on the whole spectral interval
and requires only a moderate number of representation parameters $r_{qtt}^2 \log N \ll N$,
where the average QTT rank $r_{qtt}$, see (\ref{eqn:av_rqtt}) is a small rank parameter 
adaptively depending on the truncation error $\epsilon>0$.

\subsection{QTT approximation to DOS via Lorentzians: proof of concept} \label{ssesc: QTT_DOS}

In this section we demonstrate the efficiency of the QTT approximation  
applied to the DOS via both Gaussian and Lorentzian blurring. 
We verify by various numerical experiments that the low-rank QTT approximant 
resolves perfectly the exact  DOS. 
%

\begin{figure}[htb]
\centering
\includegraphics[width=7.0cm]{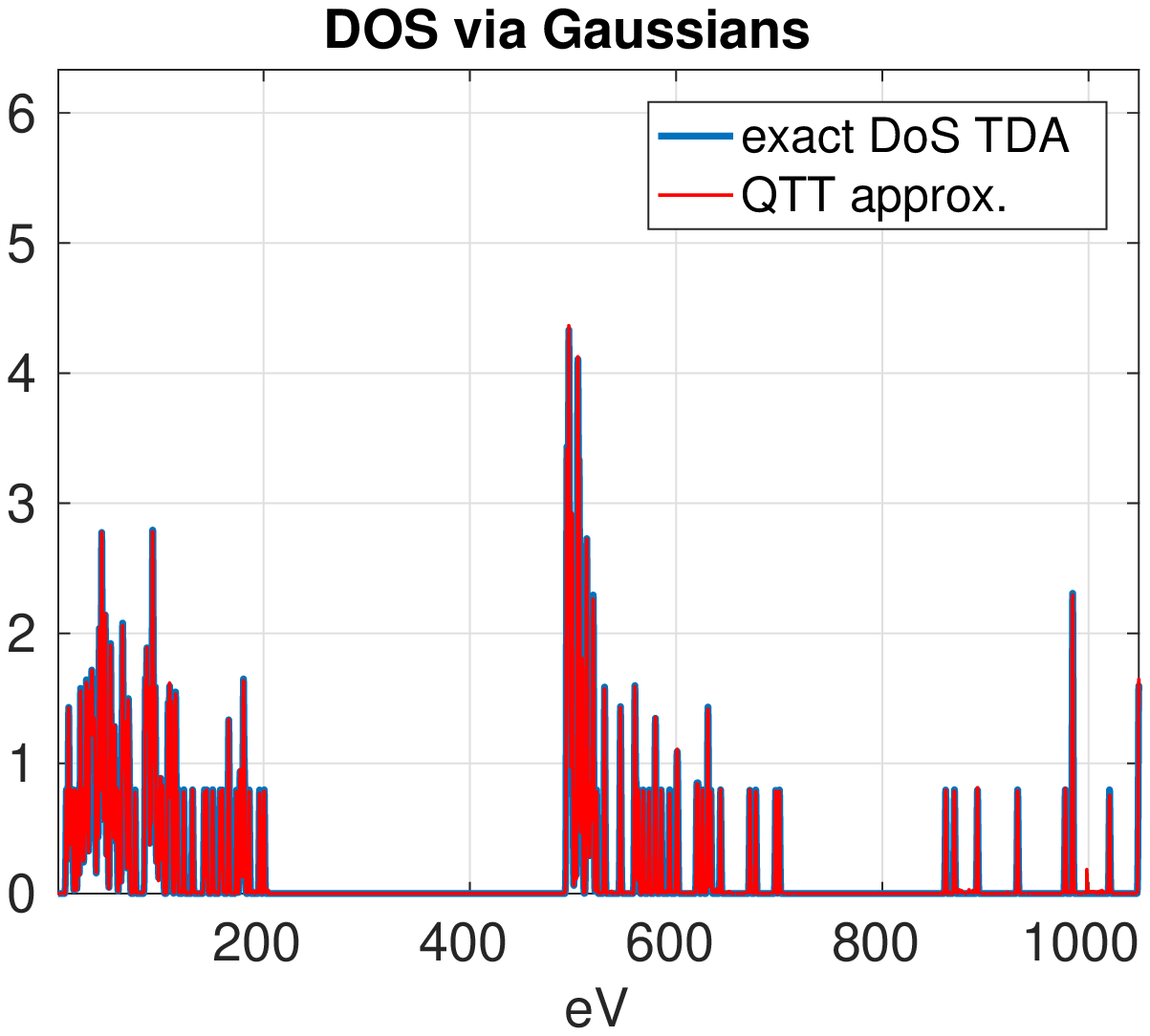} 
\includegraphics[width=7.0cm]{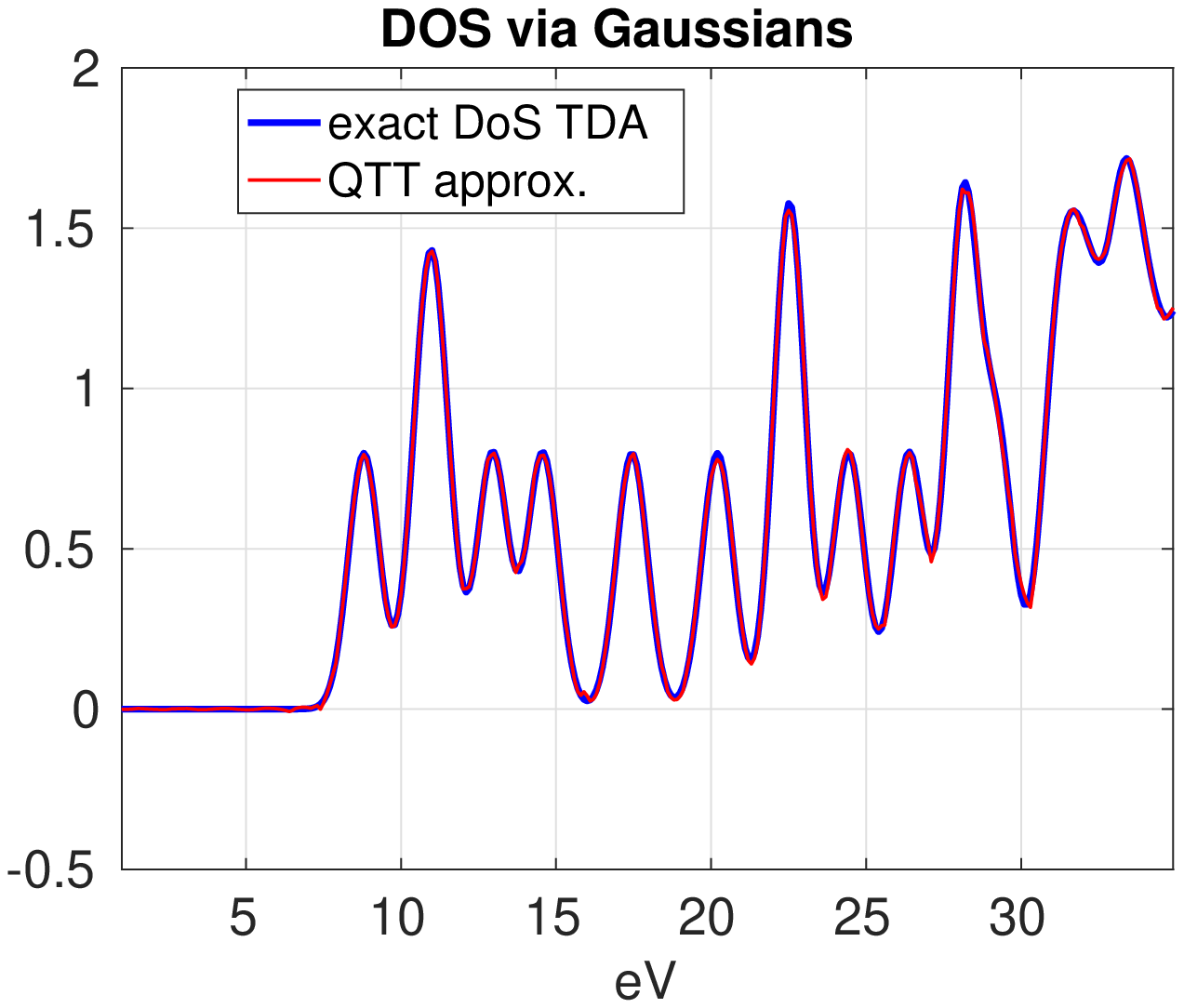}
\caption{\small DOS (in eV) for the H$_2$O molecule via Gaussians (left), and  
zoom on the  left most  part of the spectrum. Here $r_{QTT}=9.4$, $\eta=0.4$}
\label{fig:DoS_QTT_H2O}
\end{figure}

In the following numerical examples, we use a sampling vector defined on a 
grid of size $N \approx 2^{14}$. We set the QTT truncation error to $\epsilon_{QTT}=0.04$, 
if not explicitly indicated. 
For ease of interpretation, we set the pre-factor in (\ref{eqn:DOS}) to $1$. 
It is worth noting that the QTT-approximation scheme
is applied to the full TDA spectrum. Our results demonstrate
that it renders good resolution in the whole range of 
energies (in eV) including large "zero gaps".

Figure \ref{fig:DoS_QTT_H2O}, left, represents the TDA DOS (blue line)
for H$_2$O computed by Gaussian blurring with the parameter $\eta=0.4$,
and the corresponding rank-$9.4$ QTT tensor approximation (red line) to the 
discretized function $\phi_\eta(t)$. 
For this example, the number of eigenvalues is given by $n=N_{BSE}/2=180$. 
Figure \ref{fig:DoS_QTT_H2O}, right, provides a  zoom  of the corresponding 
DOS and its QTT approximant within the small energy interval $[0,40]$eV. 

\begin{figure}[htb]
\centering
\includegraphics[width=7.8cm]{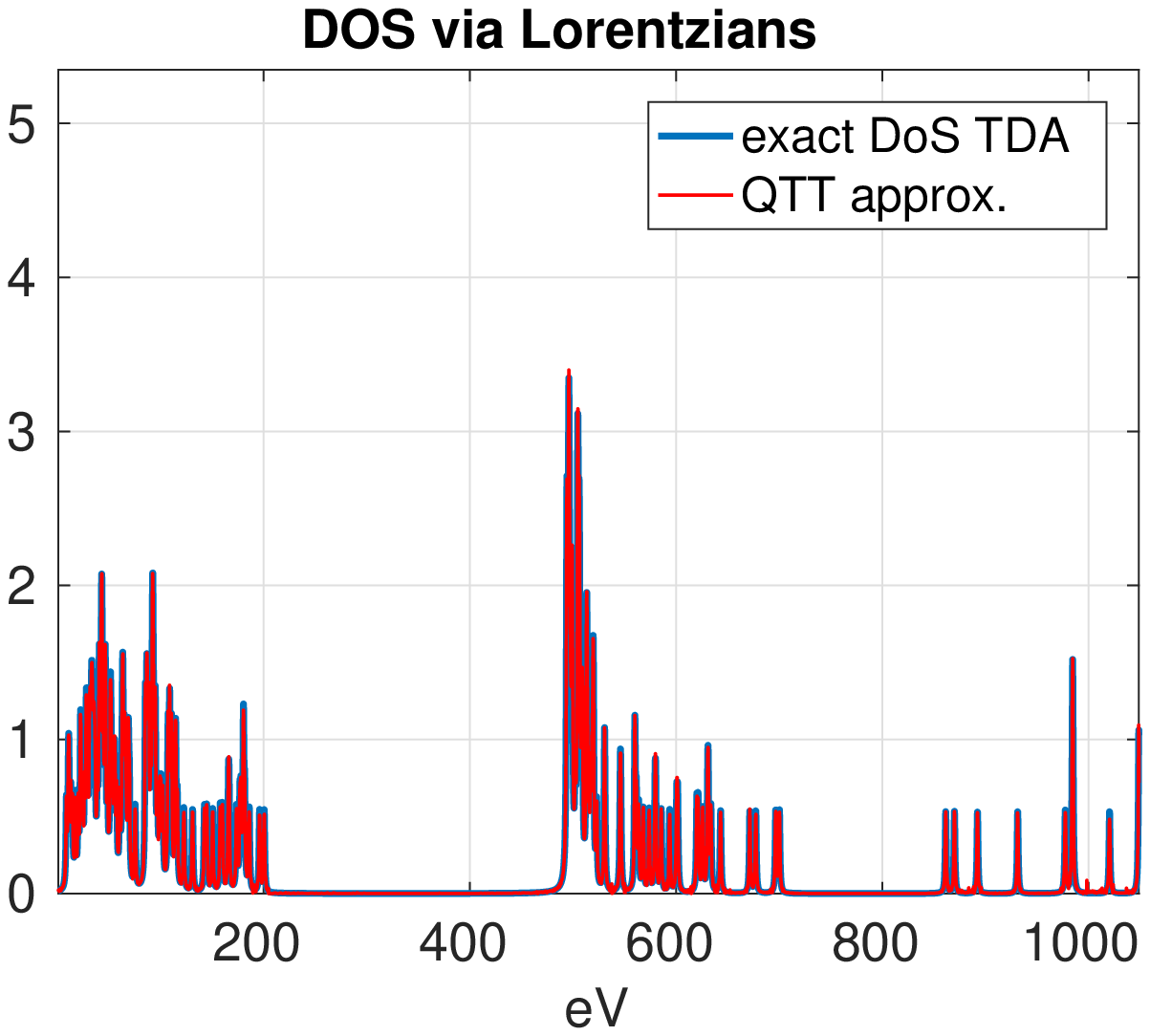}
\includegraphics[width=7.8cm]{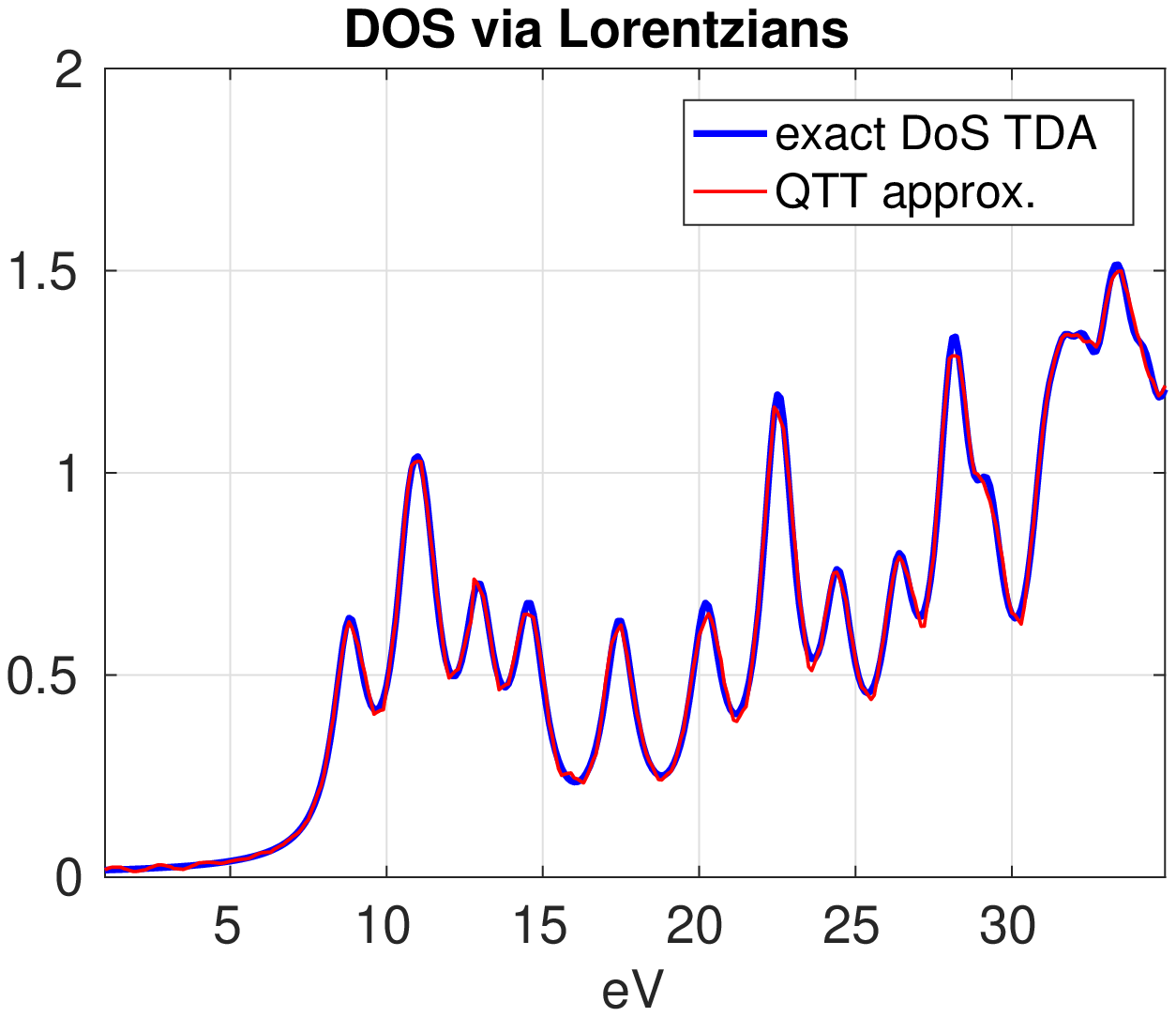} 
\caption{\small DOS for H$_2$O molecule  via Lorentzians (blue) 
and its QTT approximation (red) (left). Zoom on the left most part of the 
spectrum (right).  $\varepsilon$=0.04, $r_{QTT}=10.5$.}
\label{fig:DoS_QTT_H2O_Lor}
\end{figure}
Figure \ref{fig:DoS_QTT_H2O_Lor} demonstrates the resolution of the QTT approximation 
to the DOS via the Lorentzian blurring indicating similar QTT-ranks
as in the case of the Gaussians regularization. 

\begin{figure}[htb]
\centering
\includegraphics[width=7.8cm]{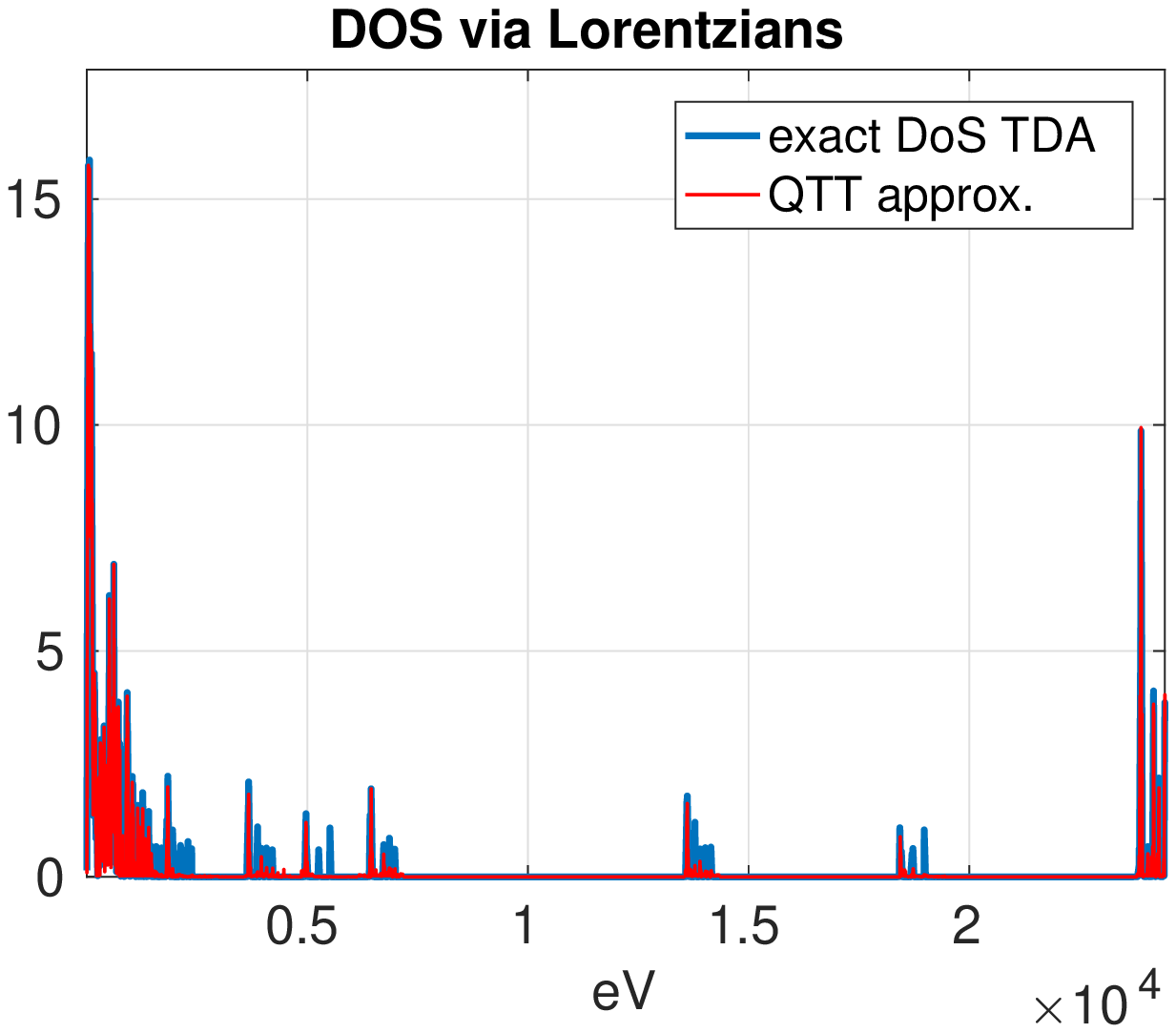}  
\includegraphics[width=7.8cm]{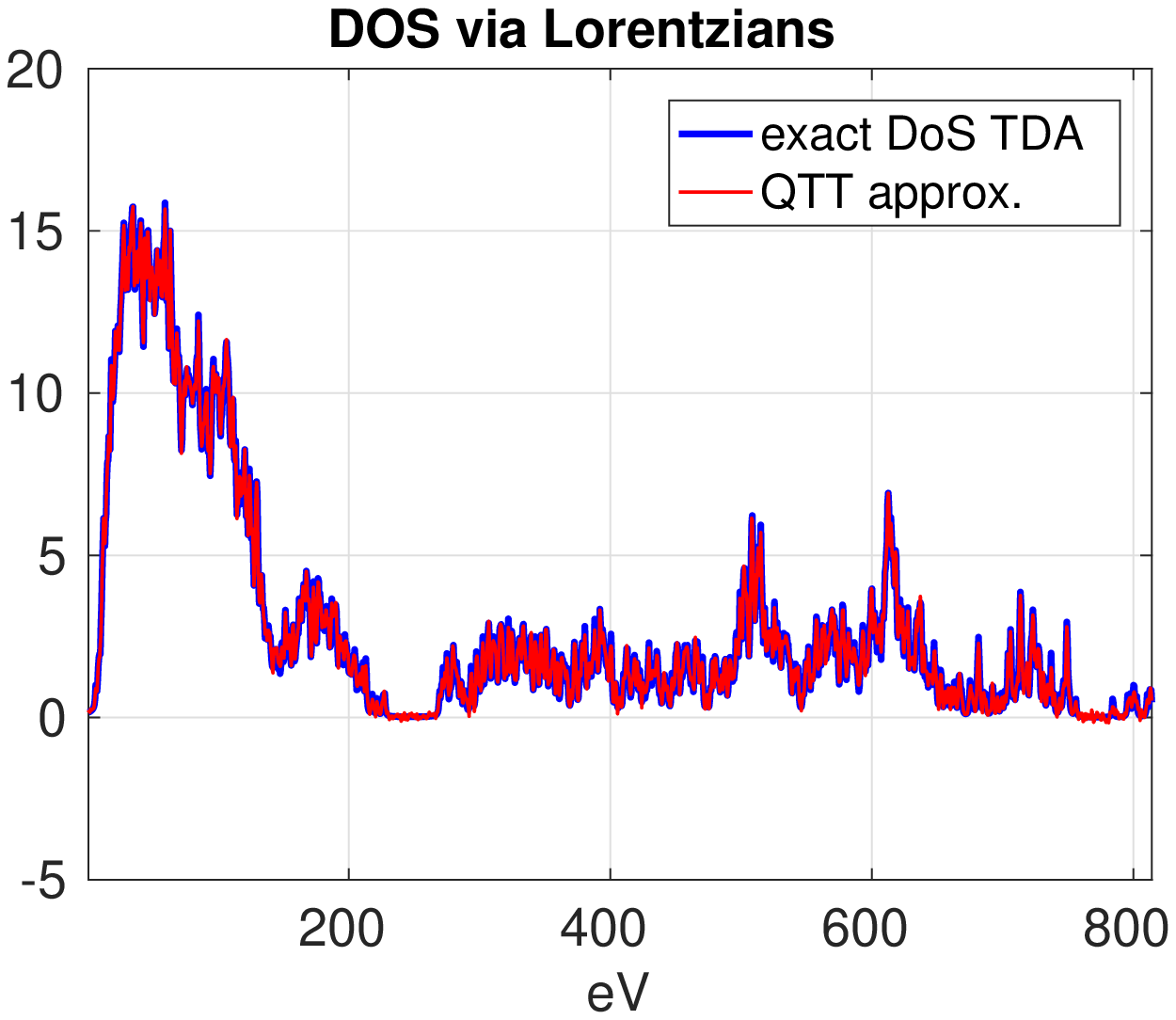} 
\caption{\small DOS for Glycine amino acid via Lorentzians (blue) 
and its QTT approximation (red), left; (left).  
Right: zoom of the first part of the spectrum.  $\varepsilon$=0.04, 
$r_{QTT}=16$.}
\label{fig:DoS_QTT_Gly}
\end{figure}
Figure \ref{fig:DoS_QTT_Gly} (Lorentzian blurring) represents similar data, but 
for the large Glycine amino acid with $n=N_{TDA}=3000$.
It is worth noting that the average QTT rank of $\phi_\eta(t)$ sampled 
on $N=2^{14}$ grid points is about $r_{QTT}=16$, ($\epsilon_{QTT}=0.04$)
though the number of eigenvalues $n$ in this case is about $20$ times larger 
than for the water molecule. 
This means that for a fixed $\eta$, the QTT-rank remains rather modest relative to the molecular size.
This observation confirms Theorem \ref{thm:QTT_R_Gaus_Broad} in Section  
\ref{ssec: QTT_ranks_DoS}.

A comparison of Figures \ref{fig:DoS_QTT_H2O} and \ref{fig:DoS_QTT_H2O_Lor}  
indicates that the Lorentzian based DOS blurring is slightly smoother than Gaussian blurring.
The moderate size of the QTT ranks in Figures \ref{fig:DoS_QTT_H2O_Lor} and \ref{fig:DoS_QTT_Gly}
clearly shows the potentials of the QTT $\varepsilon$-interpolation for modeling 
the DOS of large lattice type clusters.

We observe several gaps in the spectral densities, see Figure 
\ref{fig:DoS_QTT_H2O}, \ref{fig:DoS_QTT_H2O_Lor} and \ref{fig:DoS_QTT_Gly}
indicating that  polynomial, rational or trigonometric interpolation can be applied only to some 
energy sub-intervals, but not in the whole interval $[a,b]$.
Remarkably, the QTT approximant resolves well the DOS function in the whole energy 
interval including nearly zero values within the spectral gaps 
(hardly  possible for polynomial/rational based interpolation).

\subsection{Numerics for the QTT interpolation to the DOS function}
\label{ssec: QTT_cross_DoS}

In the previous section we demonstrated that the QTT tensor approximation provides
good resolution for the DOS function calculated for a number of molecules.
In what follows, we describe a tensor based   heuristic QTT approximation of the DOS 
by using only an incomplete set of sampling points, i.e., QTT representation by adaptive 
cross approximation (ACA) \cite{OselTyrt:2010,saOsel_cross:2011}.  
This allows us to recover the spectral density in controllable accuracy 
with  $M$ interpolation points, where $M$ asymptotically scales logarithmically 
in the grid size $N$. This heuristic approach can be viewed as a kind of 
  ``adaptive QTT $\varepsilon$-interpolation''.
In particular, we show by numerical experiments that the low-rank QTT adaptive cross interpolation 
 provides a good resolution of the target DOS 
with the number of functional calls that asymptotically scales logarithmically, $O(\log N)$,
in the size $N$ of the representation grid. 

In the case of large $N$, the QTT interpolant can be computed by the 
ACA tensor approximation procedure (see \cite{OselTyrt:2010,saOsel_cross:2011} for the detailed description)  
that, in general, does not require the full set of functional values over the $N$-grid.
In the case of large $N$ this beneficial feature allows to compute the QTT approximation 
by requiring   less   than $N$  computationally  expensive functional evaluations of $\phi_\eta(t)$.

The QTT interpolation via ACA tensor approximation serves to recover the representation 
parameters of the  QTT tensor approximant and normally requires about 
\begin{equation}\label{eqn:Cost_QTTint}
M=C_s r_{qtt}^2  \log_2 N
\end{equation}
samples of the target $N$-vector\footnote{In our application, 
this is the DOS functional $N$-vector corresponding to representations via matrix resolvents 
in (\ref{eqn:DOS_Loren_ImTr}) or (\ref{eqn:DOS_Loren_RTr}).} 
with a  small pre-factor $C_s$, usually satisfying $C_s\leq 10$,  that is independent of the fine 
interpolation grid size $N=2^{d'}$, see, for example, \cite{KhVe:16}. 
This cost estimate seems promising in the perspective of
extended or lattice type molecular systems,
requiring large spectral intervals and, as a result, a large interpolation
grid  of size $N$. Here the QTT rank parameter $r_{qtt}$ naturally depends on the 
required truncation threshold $\varepsilon>0$, characterizing the $L_2$-error between 
the exact DOS and its QTT interpolant.  The QTT tensor interpolation reduces the number of
functional calls, i.e., $M < N$, {\it if the QTT rank parameters (or threshold $\varepsilon>0$) 
are chosen to satisfy the condition}
\[
 M=C_s r_{qtt}^2  \log_2 N \leq N.
\]
The expression on the left-hand side provides  a  rather accurate estimate 
on the number of functional evaluations. 

To complete this discussion,
we present numerical tests for the low-rank QTT tensor interpolation  applied 
to the long vector discretizing the Lorentzian-DOS  on a fine representation grid of size 
$N=2^{d'}$. 
 \begin{figure}[htb]
\centering
 \includegraphics[width=15.0cm]{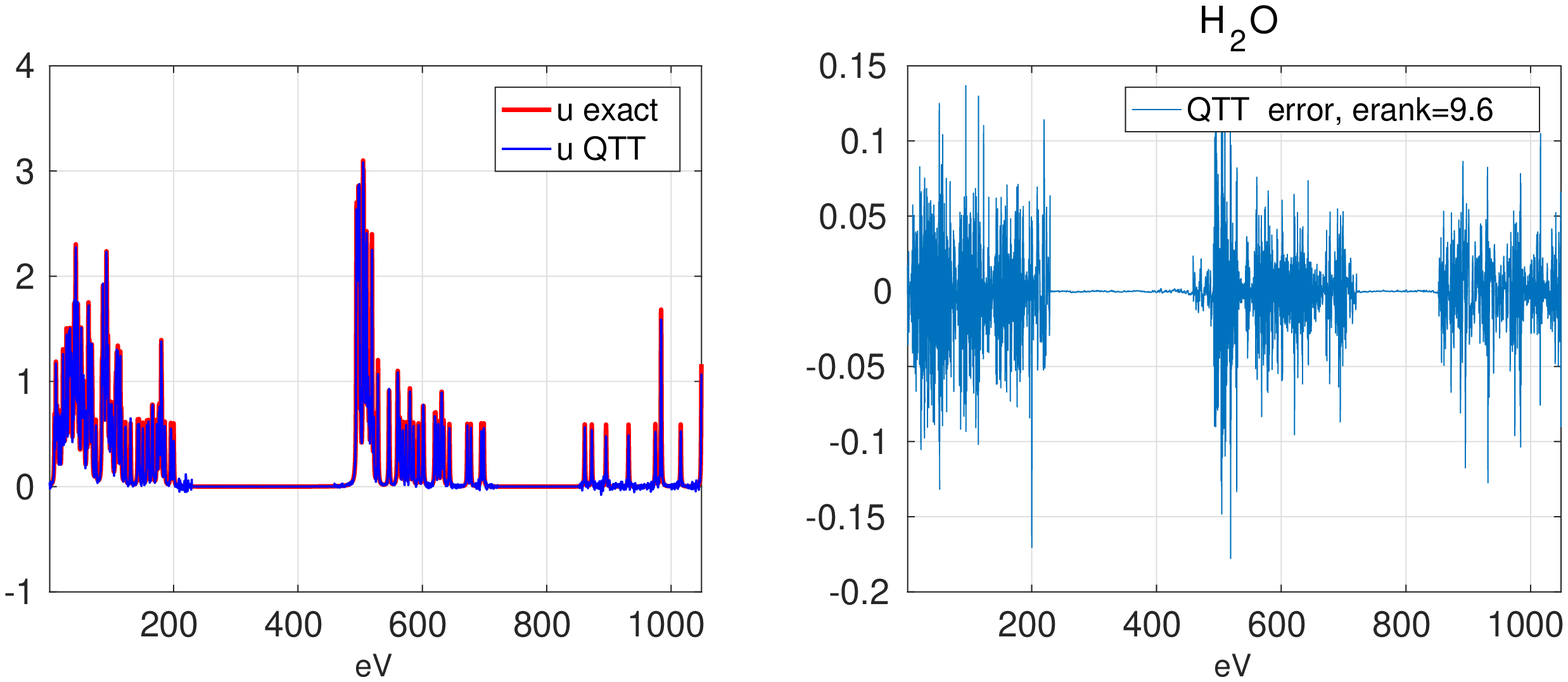}
 \includegraphics[width=15.0cm]{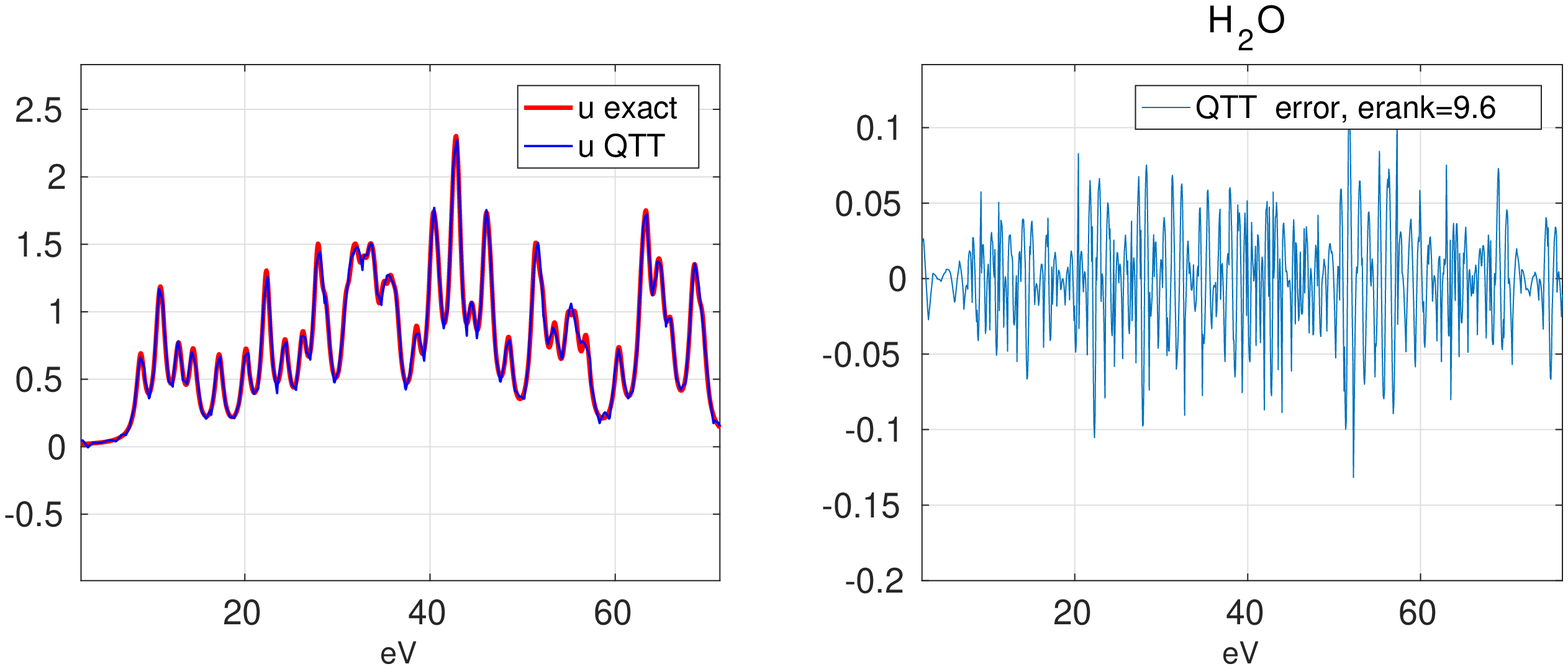}
\caption{\small QTT ACA interpolation of the DOS for H$_2$O (top) and zoom in to  
a small spectral interval (bottom).}
\label{fig:DoS_QTTcross_H2O}
\end{figure}

Figure \ref{fig:DoS_QTTcross_H2O} represents the results 
of the QTT interpolating approximation to the discretized DOS function (H$_2$O molecule).  
We use the QTT cross approximation algorithm based on 
\cite{KhQuant:09,OselTyrt:2010,saOsel_cross:2011,osel-constr-2013}
and implemented in  the MATLAB TT-toolbox (https://github.com/oseledets/TT-Toolbox). 
Here we set $\varepsilon=0.08$, $\eta=0.1$ and $N=2^{14}$, providing $r_{QTT}=9.8$.
The top two figures display the results on the whole spectral interval, while
the bottom figures show the zoom of the same data  in  the small spectral interval $[0,55]$eV.
\begin{figure}[htb]
\centering
\includegraphics[width=7.0cm]{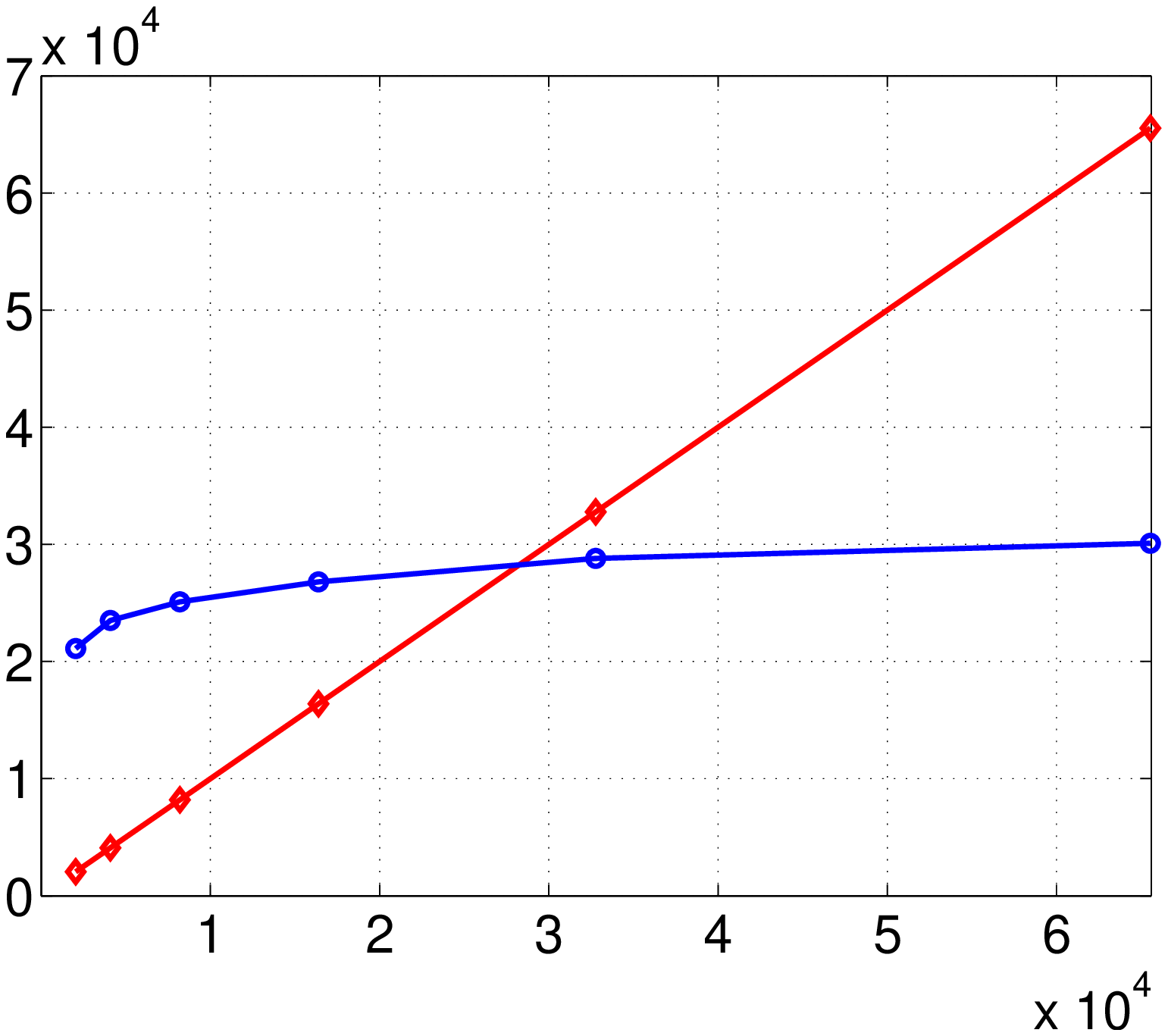}
\caption{\small DOS for H$_2$O via Lorentzians: the number of functional calls
for QTT cross approximation (blue) vs. the full grid size $N$.}
\label{fig:DoS_QTTInterp_vsN}
\end{figure}

Figure \ref{fig:DoS_QTTInterp_vsN} illustrates the logarithmic increase in the number of 
samples  required for the 
QTT interpolation of the DOS (for the H$_2$O molecule) represented on the grid of size $N=2^{d'}$,
where $d'=11,12,\ldots,16$, provided that the rank truncation threshold is chosen by 
$\epsilon=0.05$ and  the  regularization parameter is $\eta=0.2$. 
In this example, the effective pre-factor in (\ref{eqn:Cost_QTTint})
is estimated by $C_s\leq 10$.  This pre-factor characterizes the average number of samples 
required for the recovery of each of the $r_{qtt}^2  \log N$ representation  parameters  involved in
the QTT tensor ansatz.  

We observe that the QTT tensor interpolant recovers the exact DOS with  a  high precision.
The logarithmic asymptotic complexity scaling $O(\log N)$ (i.e. the number of functional calls 
required for the QTT tensor interpolation) vs. the grid size $N$ can be observed 
in Figure \ref{fig:DoS_QTTInterp_vsN} (blue line).

\subsection{Upper bounds on the QTT ranks of DOS }\label{ssec: QTT_ranks_DoS}

In this section we analyze the upper bounds on the QTT ranks of the discretized DOS
obtained by Gaussian  broadening.  Our numerical tests indicate that Lorentzian 
blurring leads to a similar QTT rank compared with Gaussians blurring when 
both are applied to the same grid and the same truncation threshold 
$\varepsilon>0$ is used in the QTT approximation. 
We consider the more general case 
of a symmetric interval, i.e.  $t, \lambda_j\in [-a,a]$.

Assume that the function 
$\phi_\eta(t)= \frac{1}{n} \sum\limits_{j=1}^{n} g_\eta(t -\lambda_j)$, $t\in [-a,a]$, 
in equation (\ref{eqn:DOS_gauss}) is discretized by sampling over the uniform $N$-grid 
 $\Omega_h$ with $N=2^d$, where  the generating Gaussian is given by
$g_\eta(t)=\frac{1}{\sqrt{2 \pi}\eta}\exp{\left(-\frac{t^2}{2 \eta^2}\right)}$.
Denote the corresponding $N$-vector by ${\bf g}={\bf g}_\eta$, and the resulting 
discretized density vector by 
$$
\phi_\eta(t) \mapsto {\bf p}={\bf p}_\eta=
 \frac{1}{n} \sum\limits_{j=1}^{n} {\bf g}_{\eta,j}  \in \mathbb{R}^N,
$$
where  the shifted Gaussian is assigned by the vector 
$g_\eta(t -\lambda_j)\mapsto {\bf g}_j={\bf g}_{\eta,j}$.

Without loss of generality, we
suppose that all eigenvalues are situated within the set of grid points, i.e.
$\lambda_j\in \Omega_h$. Otherwise, we can slightly relax their positions provided that 
the mesh size $h$ is small enough. This is not a  severe  restriction for the QTT 
approximation  of functional vectors since storage and complexity requests
depend only logarithmically on $N$.

\begin{theorem}\label{thm:QTT_R_Gaus_Broad}
Assume that the effective support of  the  shifted Gaussians 
$g_\eta(t -\lambda_j)$, $j=1,\dots,n$, is included
 in   the computational interval $[-a,a]$. Then the QTT $\varepsilon$-rank of the 
vector ${\bf p}_\eta$ is bounded by
\[
 rank_{QTT}({\bf p}_\eta)\leq C a \log^{3/2}(|\log \varepsilon|),
\]
where the constant $C=O(|\log \eta|) >0$ depends only logarithmically on the 
regularization parameter $\eta$.
\end{theorem}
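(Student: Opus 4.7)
The plan is to reduce the bound on $\mathrm{rank}_{QTT}({\bf p}_\eta)$ to QTT rank bounds for two much simpler functional vectors, by combining the subadditivity of QTT rank under addition with the submultiplicativity under the Hadamard product.

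First, I would complete the square to peel off the fixed Gaussian profile,
\[
g_\eta(t - \lambda_j) = \frac{1}{\sqrt{2\pi}\eta}\, e^{-t^2/(2\eta^2)}\, e^{-\lambda_j^2/(2\eta^2)}\, e^{t \lambda_j/\eta^2},
\]
so that on the uniform $N$-grid $\Omega_h$
\[
{\bf p}_\eta = {\bf g}_\eta \odot {\bf F}, \qquad F(t) := \frac{1}{n}\sum_{j=1}^n e^{-\lambda_j^2/(2\eta^2)}\, e^{t\lambda_j/\eta^2}.
\]
The Hadamard product bound gives $\mathrm{rank}_{QTT}({\bf p}_\eta) \le \mathrm{rank}_{QTT}({\bf g}_\eta)\cdot \mathrm{rank}_{QTT}({\bf F})$, so it suffices to estimate the two factors separately.

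Second, for the fixed Gaussian vector ${\bf g}_\eta$ I would invoke the QTT rank estimates for entire Gaussian-type functional vectors recalled in Section 4.1 and proved in \cite{KhQuant:09,khor-survey-2014}; these give $\mathrm{rank}_{QTT}({\bf g}_\eta) = O(|\log\eta|)$ at truncation accuracy $\varepsilon$, absorbing the $|\log\eta|$ prefactor in the final constant $C$. Third, and this is the delicate part, I would bound $\mathrm{rank}_{QTT}({\bf F})$ via the Fourier representation
\[
\phi_\eta(t) = \frac{1}{2\pi}\int_{-\infty}^{\infty} \hat\mu(\xi)\, e^{-\eta^2\xi^2/2}\, e^{i\xi t}\, d\xi, \qquad \hat\mu(\xi) = \frac{1}{n}\sum_{j=1}^n e^{-i\xi\lambda_j}.
\]
The rapid Gaussian decay in frequency allows truncation to $|\xi|\le \Xi$ with $\Xi = O(\eta^{-1}\sqrt{|\log\varepsilon|})$; on this interval the integrand is analytic in a strip, so a sinc or trapezoidal quadrature with $M$ nodes converges super-algebraically. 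The resulting approximation
\[
\phi_\eta(t) \approx \sum_{k=1}^M w_k\, e^{i\xi_k t}
\]
has QTT rank at most $2M$ in real arithmetic, since every complex exponential sampled on a uniform grid has real QTT rank at most $2$ (as recorded in \S\ref{ssec:QTT_Approx}).

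The main obstacle is the quantitative quadrature estimate yielding precisely $M = O\bigl(a\,\log^{3/2}|\log\varepsilon|\bigr)$: one must simultaneously tune the truncation radius $\Xi$, the quadrature spacing, and the analyticity-strip width so that the three error contributions (tail truncation, aliasing, discretization) balance to produce an iterated-logarithmic dependence on $\varepsilon$ rather than the naive $O(|\log\varepsilon|^{1/2})$ or $O(|\log\varepsilon|)$ that a straightforward Paley--Wiener sampling argument would give. The linear factor $a$ enters through the effective Nyquist window $a\Xi$. Once this quantitative quadrature step is carried out, combining it with the Gaussian rank estimate and the Hadamard product inequality yields $\mathrm{rank}_{QTT}({\bf p}_\eta) \le C\, a\, \log^{3/2}|\log\varepsilon|$ with $C = O(|\log\eta|)$, as claimed.
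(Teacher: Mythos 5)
Your overall route coincides with the paper's: expand the sum of shifted Gaussians in a common set of Fourier harmonics on $[-a,a]$, observe that the shifts only modulate the phases of the coefficients (so the number of essential harmonics is that of a single Gaussian, independent of $n$ and of the positions $\lambda_j$), and conclude from the exact rank-$2$ QTT representation of each harmonic together with subadditivity of the QTT rank under addition. Your factorization $\widehat{\phi_\eta}(\xi)=\hat\mu(\xi)\,e^{-\eta^2\xi^2/2}$ with $|\hat\mu(\xi)|\le 1$ captures this $n$-independence correctly. Two criticisms, however. First, the completing-the-square/Hadamard step is both redundant and hazardous: redundant because your third step actually bounds the rank of $\phi_\eta$ directly from its Fourier representation rather than the rank of ${\bf F}$; hazardous because the factors $e^{-\lambda_j^2/(2\eta^2)}$ and $e^{t\lambda_j/\eta^2}$ have dynamic range of order $e^{a^2/\eta^2}$, so an $\varepsilon$-rank bound for ${\bf F}$ in any reasonable norm does not transfer through the Hadamard product to an $\varepsilon$-rank bound for ${\bf p}_\eta$; the exact-rank submultiplicativity you invoke is not the inequality the $\varepsilon$-rank argument needs.

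Second, and decisively, the quantitative step you yourself label ``the main obstacle'' is the entire content of the theorem, and you do not supply it. A Nyquist count after truncating the Gaussian envelope at $\Xi=O\bigl(\eta^{-1}\sqrt{|\log\varepsilon|}\bigr)$ gives $M=O\bigl((a/\eta)\sqrt{|\log\varepsilon|}\bigr)$ harmonics, which is polynomial in $1/\eta$ and in $|\log\varepsilon|^{1/2}$ --- much weaker than the claimed $C\,a\,\log^{3/2}(|\log\varepsilon|)$ with $C=O(|\log\eta|)$. You correctly diagnose that the truncation, aliasing and discretization errors must be balanced to obtain the iterated logarithm, but you give no mechanism for doing so. The paper obtains the harmonic count $m_0=O(a|\log\eta|\log^{3/2}(|\log\varepsilon|))$ by appealing to the sinc-quadrature and exponential-sum machinery of \cite{VeBoKh:Ewald:14,DKhOs-parabolic1-2012} (admittedly also without reproducing the derivation). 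As written, your argument establishes a rank bound of the form $O\bigl((a/\eta)\sqrt{|\log\varepsilon|}\bigr)$, not the stated one; the missing estimate is precisely the step you defer.
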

\begin{proof} 
The main argument of the proof is similar to that in 
\cite{VeBoKh:Ewald:14,DKhOs-parabolic1-2012}:
the sum of discretized Gaussians, each represented in Fourier basis,
can be expanded with merely the same number of Fourier harmonics (uniform basis) 
as each individual Gaussian. 

Now we estimate the number of essential 
Fourier coefficients of the Gaussian vectors ${\bf g}_{\eta,j}$ with a fixed exponent 
parameter $\eta$,
\[
 m_0=O(a |\log \eta| \log^{3/2}(|\log \varepsilon|)),
\]
taking into account their exponential decay. Here $\varepsilon>0$ denotes the 
rank truncation threshold. Notice that $m_0$ depends logarithmically on $\eta$.
Since each Fourier harmonic has exact rank-$2$ QTT representation 
(see Section \ref{ssec:QTT_Approx}), we arrive at the claimed bound.
\end{proof}

Notice that the Fourier transform of the Lorentzian in (\ref{eqn:Delta_Lorentz})
is given by 
\[
 e^{-|k|\eta},
\]
thus a similar QTT rank bound can be derived for the case of Lorentzian blurred DOS. 

\begin{table}[hbp]
 \begin{center}
 \begin{tabular}
[c]{|c|c|c|c|c|c|c|c|}%
\hline
Molecule & H$_2$O & NH$_3$ & H$_2$O$_2$ & N$_2$H$_4$ &C$_2$H$_5$OH & C$_2$H$_5$ NO$_2$ & C$_3$H$_7$ NO$_2$ \\
 \hline
 $n=N_{ov}$ & $180$ & $215$ & $531$ & $657$ & $1430$   & $3000$ & $4488$   \\
 \hline
QTT ranks & $11$ & $11$ & $12$ & $11$ & $15$ & $16$ & $13$   \\
\hline
\end{tabular}
\end{center}
\caption{\small QTT ranks of Lorentzians-DOS for some molecules; $\varepsilon=0.04$, $\eta=0.4$, $N=16384$.}
\label{tab:QTT_ranks_Lor}
\end{table} 
Table \ref{tab:QTT_ranks_Lor} shows that the average QTT tensor rank 
remains almost independent of the molecular size, which confirms 
Theorem \ref{thm:QTT_R_Gaus_Broad}.
The weak dependence of the rank parameter on the molecular geometry can be observed.

\section{Towards calculation of the BSE absorption spectrum}\label{sec:BSE_case}

In this section we describe the generalization of
our approach to the case of the full BSE system.
Within the BSE framework, the optical absorption spectrum of a molecule is defined by
\begin{equation}
\epsilon(\omega) \equiv d_r \herm\delta(\omega I_{2n}-H)d_l
=\sum_{j=1}^{2n}
\frac{(d_r\herm (z_r)_j)((z_l)_j\herm d_l)}{(z_l)_j\herm(z_r)_j}
\delta(\omega-\lambda_j),
\label{eq:bseabs}
\end{equation}
where \[
d_r=\begin{bmatrix} d \\ -\conj d\end{bmatrix}
\quad \text{and} \quad
d_l=\begin{bmatrix} d \\ \conj d\end{bmatrix}
\]
are the right and left \emph{optical transition vectors}, respectively, and $d$ is a vector \
reshaped from a transition matrix $T$ of dimension $N_o \times (N_b - N_o)$. 
The $(i,a)$th element of $T$ is given by $\langle \psi_i | \vec{x} | \psi_a \rangle$, where $\vec{x}$ is a \
position operator in the direction of $x$ and $\psi_i$ and $\psi_a$ are 
a pair of occupied and unoccupied molecular \
orbitals~\cite{molgw}. 

Similar to the DOS, the function $\epsilon(\omega)$ is a sum
of Dirac-$\delta$ peaks centered at eigenvalues of the BSH. 
However, the height of each peak, which is often referred to 
as the oscillator strength, is determined by the projection of
the corresponding left and right eigenvectors of $H$ onto the optical 
transition vectors $d_l$ and $d_r$.

A smooth approximation of \eqref{eq:bseabs} can be obtained by 
replacing the Dirac-$\delta$ function with either a Gaussian 
or a Lorentzian with an appropriate broadening width.  If we choose
to smooth by a Lorentzian, we then need to compute
\begin{equation}
\epsilon(\omega) \approx \frac{1}{\pi}\mbox{Im}\biggl[ d_r\herm 
\left(\omega I_{2n}-H -i \eta I_{2n} \right)^{-1} d_l\biggr],
\label{eq:lorabs}
\end{equation}
where $\eta$ is related to the width of broadening. 

For a fixed frequency $\omega$, \eqref{eq:lorabs} can be evaluated by
solving a linear system of the form
\[
\left(\omega I_{2n}-H -i \eta I_{2n} \right) x = d_l.
\]
The block sparse and low-rank structure of $H$ can be used to reduce
the cost for solving such a linear system.

The detailed numerical analysis of this scheme for the BSE system  will be
a  topic of a forthcoming paper.

\section{Conclusions}

The new approach to approximating the DOS of the TDA of a BSE Hamiltonian
is based on two main techniques.
First, we take advantage of the low rank structure of the TDA and evaluate the trace of the resolvent 
directly instead of using stochastic sampling techniques.
The presented economical algorithm provides an efficient way 
to calculate the DOS regularized by Lorentzians. The cost of
the computation scales linearly with respect to the matrix size.
Second, a QTT based tensor interpolation scheme is used to
approximate the DOS discretized on large representation grids.
This  approximation  scheme allows us to estimate the DOS
with $M$ function evaluations, where $M$ scales logarithmically
with respect to the grid size on which the DOS is evaluated.  
The approach can be applied to a wide class of rank-structured symmetric spectral problems.

In Theorems \ref{thm:Trace_cost} and \ref{thm:Trace_cost_real}, we prove linear scaling of the structured 
trace calculation algorithm in the matrix size.
This result is confirmed by numerical experiments performed 
to compute the DOS of BSH associated with some molecular systems as shown in 
Figure \ref{fig:DoS_trace_Times}.

We justify the low rank QTT approximation of the DOS in the case of Gaussian regularization, 
see Theorem \ref{thm:QTT_R_Gaus_Broad}.
The efficiency of low-rank QTT approximation to DOS is illustrated numerically on the example
of discrete Laplacian as well as for the BSE spectral problem for several moderate size molecules.
Numerical tests demonstrate the logarithmic complexity 
of the QTT cross approximation scheme  in the grid size, applied to the discretized DOS
as depicted in Figure \ref{fig:DoS_QTTInterp_vsN}.

It is worth noting that our approach serves to recover DOS on the whole 
spectral interval which is demonstrated in a number of numerical tests.
However, the algorithms are applicable to any fixed subinterval of interest in the whole
spectrum, which will correspondingly reduce the QTT tensor ranks and 
the overall computational time.

The presented methods introduce a new efficient tool for 
numerical approximation to the DOS for large matrices arising in various applications
in condensed matter physics, computational quantum chemistry as well as 
in large-scale problems of numerical linear algebra.

\begin{footnotesize}

\bibliographystyle{abbrv}
\bibliography{BSE_AbsSpectr}
\end{footnotesize}

\end{document}